\documentclass[12pt]{amsart}
 \usepackage[dvips]{epsfig}
 \usepackage{comment}
 \usepackage{enumerate}
 \usepackage{amsgen, amstext,amsbsy,amsopn, amsthm, amsfonts,amssymb,amscd,amsmat
 h,euscript,enumerate,url,verbatim,calc,xypic}

\oddsidemargin -0.2in
\evensidemargin -0.2in
\textwidth6.2in
\textheight 22cm

 \usepackage{latexsym}
 \usepackage{graphics}
 \usepackage{color}
\usepackage[utf8]{inputenc}
\newcommand{\Pic}{\rm Pic\,}

\newcommand{\proset}{\,\mathrel{\lower 4pt\hbox{$\scriptscriptstyle/$}
\mkern -14mu\subseteq }\,} 
 
 \newtheorem{theorem}{Theorem}[section]
  \newtheorem{corollary}[theorem]{Corollary}
 \newtheorem{lemma}[theorem]{Lemma}
 \newtheorem{proposition}[theorem]{Proposition}

\newtheorem{remark}[theorem]{Remark}
 
 \newtheorem{definition}[theorem]{Definition}

 \newcommand{\prolim}[1]{\operatorname{``}\lim\limits_{#1}   \operatorname{''}}

\numberwithin{equation}{section}

\def\ker{\operatorname{ker}}

\usepackage{amsmath}
 \makeatother 
\begin{document}

\title{On the vanishing of Relative Negative K-theory}
\date{\today}
 \author{Vivek Sadhu}
 
 \address{School of Mathematics, Tata Institute of Fundamental Research, 1 Homi Bhaba Road, Colaba, Mumbai 400005, India}
 \address{Department of Mathematics, Indian Institute of Science Education and Research, Bhopal, Bhopal Bypass Road, Bhauri, Bhopal-462066, Madhya Pradesh, India}
 \email{sadhu@math.tifr.res.in, vsadhu@iiserb.ac.in, viveksadhu@gmail.com}
 \keywords{Relative negative K-groups, Smooth map, Subintegral map}
 
\subjclass{14C35, 19D35, 19E08}

 \thanks{Author was supported by TIFR, Mumbai Postdoctoral Fellowship and IISER Bhopal grant INS/MATH/2017097}

\begin{abstract}
 In this article, we study the relative negative K-groups $K_{-n}(f)$ of a map $f: X \to S $ of schemes. We prove a relative version of Weibel's conjecture; i.e. if $f: X \to S$ is a smooth affine map of noetherian schemes with $\dim S=d$ then $K_{-n}(f)=0$ for $n> d+1$ and the natural map $K_{-n}(f) \to K_{-n}(f \times \mathbb{A}^{r})$ is an isomorphism for all $r>0$ and $n>d.$ We also prove a vanishing result for relative negative K-groups of a subintegral map.
\end{abstract}
\maketitle

\section{introduction}
In 1980, Weibel conjectured that for a $d$-dimensional noetherian scheme $X$, the negative $K$-groups should vanish after the dimension and the natural map $K_{-d}(X) \to K_{-d}(X \times \mathbb{A}^{r})$ for all $r>0$ should be an isomorphism; i.e. $X$ should be $K_{-d}$-regular (see Question 2.9 of \cite{Wei80}). Significant progress related to this conjecture has been made in the articles \cite{Cisinki}, \cite{CHSW}, \cite{GH}, \cite{Has}, \cite{KS}, \cite{SK}, \cite{Krishna}, \cite{wei duke} by various authors. Very recently, a complete answer is given in \cite{KST} by Kerz-Strunk-Tamme. 

 Let $f: X\to S$ be a morphism of schemes. By definition, the $n$-th relative K-group $K_{n}(f)$ is $\pi_{n}K(f)$, where $n \in \mathbb{Z}$ and $K(f)$ is the homotopy fiber of $K(S) \to K(X).$ Here and throughout, $K(X)$ denotes the non-connective Bass $K$-theory spectrum of the scheme $X$. Similarly, by replacing $K$ by $KH,$ we get the $n$-th relative homotopy $K$-group $KH_{n}(f).$ We say that $f: X \to S$ is $K_{n}$-regular if the natural map $K_{n}(f) \to K_{n}(f \times \mathbb{A}^{r})$ is an isomorphism for all $r>0.$ In this article, we consider Weibel's conjecture in the relative setting. More precisely, we are interested in investigating the condition on $f$ under which an analogous vanishing and regularity result holds for the relative negative $K$-groups. 

Firstly, we consider the case when $f$ is a smooth affine map. We discuss such a case in Section \ref{using KST}.  Using the technique of \cite{KS} and \cite{KST}, we prove the following 

\begin{theorem}\label{smooth affine}
 Let $f: X \to S$ be a smooth, affine map of noetherian schemes. Assume that $\dim S= d.$ Then $K_{-n}(f)=0$ for $n>d+1$ and $f$ is $K_{-n}$-regular for $n>d.$
\end{theorem}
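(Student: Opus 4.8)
The plan is to reduce the theorem to the following absolute statement about the source, and then to prove that statement by transporting the pro-cdh-descent techniques of \cite{KS} and \cite{KST} to the relative situation, running the induction over $\dim S$ in place of $\dim X$.

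\smallskip
(\textbf{$\ast$}) \emph{If $g\colon Y\to S$ is a smooth affine map of noetherian schemes with $\dim S=d$, then $K_{-m}(Y)=0$ for $m>d$ and $Y$ is $K_{-d}$-regular.}

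\smallskip
\noindent For the reduction, use the long exact sequence $\cdots\to K_{-n+1}(X)\to K_{-n}(f)\to K_{-n}(S)\to K_{-n}(X)\to\cdots$ coming from $K(f)=\operatorname{fib}(K(S)\to K(X))$, together with \cite{KST} applied to $S$ (giving $K_{-n}(S)=0$ for $n>d$ and, via Vorst's theorem, $K_{-n}$-regularity of $S$ for all $n\ge d$), and the observation that $X\times\mathbb{A}^{r}=\mathbb{A}^{r}_{X}$ is again smooth and affine over $S$ because it factors as $\mathbb{A}^{r}_{X}\to X\to S$, so that ($\ast$) applies to it too. Then for $n>d+1$ one gets $K_{-n}(f)=0=K_{-n}(f\times\mathbb{A}^{r})$ at once, while for $n=d+1$ the long exact sequences identify $K_{-d-1}(f)\cong\coker(K_{-d}(S)\to K_{-d}(X))$ and $K_{-d-1}(f\times\mathbb{A}^{r})\cong\coker(K_{-d}(S\times\mathbb{A}^{r})\to K_{-d}(X\times\mathbb{A}^{r}))$, whose comparison is an isomorphism since $S$ and (by ($\ast$)) $X$ are $K_{-d}$-regular. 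Hence the theorem follows from ($\ast$).

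\smallskip
\noindent To prove the vanishing in ($\ast$) I induct on $d=\dim S$. Since $K_{-m}(-)\cong K_{-m}((-)_{\mathrm{red}})$ for $m\ge 0$ and, $g$ being smooth, $Y\times_{S}S_{\mathrm{red}}$ is reduced, hence equal to $Y_{\mathrm{red}}$ and smooth affine over $S_{\mathrm{red}}$ with $\dim S_{\mathrm{red}}=d$, one may assume $S$ (and $Y$) reduced. If $S$ is regular then $Y$ is regular and $K_{-m}(Y)=0$ for all $m>0$; this settles $d=0$ and begins a Noetherian induction on $S$. For the inductive step pick a proper birational modification $S'\to S$ which is an isomorphism over a dense regular open $S\smallsetminus Z$ (so $\dim Z<d$), with exceptional locus $E$ ($\dim E<d$), chosen as in \cite{KST} so that this Noetherian induction terminates at a regular scheme. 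As $g$, hence $Y\times_{S}S'\to S'$, is flat, pulling $g$ back gives an abstract blow-up square $(Y':=Y\times_{S}S'\to Y,\ Y\times_{S}Z\hookrightarrow Y)$ whose $r$-th infinitesimal thickenings are $Y\times_{S}Z_{r}$ and $Y'\times_{S'}E_{r}$, smooth affine over $Z_{r}$ and $E_{r}$ with $\dim Z_{r},\dim E_{r}<d$. By \cite{KST}, $K$-theory satisfies pro-cdh descent, so $K(Y)$ is the total homotopy fibre of the pro-square with remaining vertices $K(Y')$, $\{K(Y\times_{S}Z_{r})\}$, $\{K(Y'\times_{S'}E_{r})\}$; by the inductive hypothesis the latter two pro-spectra vanish in homotopical degrees $\le -d$, so the Mayer--Vietoris sequence gives $K_{-m}(Y)\cong K_{-m}(Y')$ for $m>d$, and the inner Noetherian induction (reducing $Y'$ to the regular case) yields $K_{-m}(Y)=0$ for $m>d$.

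\smallskip
\noindent The $K_{-d}$-regularity of $Y$ is obtained together with the vanishing in the same induction, by running the pro-cdh-descent comparison simultaneously for $Y$ and $\mathbb{A}^{r}_{Y}$, the needed compatibility of the correction terms with $(-)\times\mathbb{A}^{r}$ coming from the regularity part of the inductive hypothesis over the lower-dimensional exceptional loci, exactly as Kerz--Strunk--Tamme handle $K_{-\dim}$-regularity in the absolute case. I expect this boundary degree $m=d$, where the vanishing argument no longer applies, to be the main obstacle, together with carrying the Noetherian induction through when $S$ is not excellent (so that the regular locus may fail to be open), a point handled by the approximation arguments of \cite{KST}.
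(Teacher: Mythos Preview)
Your reduction to the absolute statement $(\ast)$ and the outer induction on $d=\dim S$ match the paper (this is essentially Lemma~\ref{basic reduction}), as do the reduction to $S$ reduced and the base case $d=0$.

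The inductive step, however, has a genuine gap. You say the modification $S'\to S$ is ``chosen as in \cite{KST} so that this Noetherian induction terminates at a regular scheme,'' and then invoke an ``inner Noetherian induction (reducing $Y'$ to the regular case).'' But \cite{KST} does \emph{not} work this way: without resolution of singularities there is no mechanism by which iterated proper birational modifications of $S$ reach a regular scheme, and since $\dim S'=\dim S=d$ you cannot apply the outer induction hypothesis to $Y'\to S'$ either. As written the argument is circular. What the paper (following \cite{KS},\cite{KST}) actually does is different in a crucial way: given $\xi\in K_{i}(X)$ with $i<-d$, one applies Proposition~5 of \cite{KS} (platification par \'eclatement) to the smooth quasi-projective map $f\colon X\to S$ to produce a projective birational $p\colon S'\to S$ such that $\tilde p^{*}\xi=0$ in $K_{i}(X')$, where $X'=X\times_{S}S'$. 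Pro-cdh descent (Theorem~A of \cite{KST}) together with the induction hypothesis over the centers $Y,Y'$ of dimension $<d$ then shows that $\tilde p^{*}\colon K_{i}(X)\to K_{i}(X')$ is \emph{injective}, hence $\xi=0$. The blow-up is chosen to kill a given element, not to improve the singularities of $S$; one never needs to know $K_{i}(X')=0$. The regularity statement is handled the same way, applying Proposition~5 of \cite{KS} to $\mathbb{A}^{r}_{X}\to S$ for each $r$.

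A secondary point: Proposition~5 of \cite{KS} requires the map to $S$ to be quasi-projective. The paper secures this by first reducing to $S$ affine via Zariski descent for $\mathcal{K}(f)$ (Lemma~\ref{zariskidescent} combined with Proposition~6.1 of \cite{KST}); your outline omits this reduction.
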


However, we observe that the above theorem also holds for smooth, quasi-projective maps with a reduced base (see Theorem \ref{smooth quasiprojective}). But for a non-reduced base the result may fail (see Remarks \ref{regularity fails in general}, \ref{false in surface case}).

Secondly, we consider the case when $f$ is smooth, but may not be affine. In this situation, we are able to prove a vanishing result for relative negative homotopy $K$-groups assuming the resolution of singularities. We prove such a result in Section \ref{KH vanish}. In Section \ref{KH vanish}, all the schemes are defined over a field $k.$ 

For a scheme $X,$ let $Sch/X$ be the category of schemes of finite type over $X.$ Given a morphism $f: X \to S$ of schemes, consider the relative $K$-theory presheaf $ (U\to S) \mapsto K_{n}(f|_{U})$ on the category $Sch/S,$ where $f|_{U}$ is the map $X\times_S U \to U.$ Let $a$ denote the natural morphism from the cdh-site to the Zariski site on the category $Sch/S.$ Then $a_{cdh}\mathcal{K}_{n, zar}^{f}$ denotes the cdh-sheafification of the relative $K$-theory presheaf. Here $\mathcal{K}_{n, zar}^{f}$ is the associated Zariski sheaf. For simplicity, we write $\mathcal{K}_{n, cdh}^{f}$ instead of $a_{cdh}\mathcal{K}_{n, zar}^{f}.$ Now we state the main result of the section \ref{KH vanish}.
\begin{theorem}\label{Smooth,surjective}
Let $f: X\to S$ be a smooth map of noetherian schemes over a field $k$. Assume that the resolution of singularities holds over $k$ (see the section \ref{KH vanish} for the definition) and $\dim S=d.$ Then $KH_{-n}(f)=0$ for $n>d+1$ and $H_{cdh}^{d}(S, \mathcal{K}_{-1, cdh}^{f})\cong KH_{-d-1}(f).$ 
\end{theorem}


Next, we discuss the situation when the map $f: X \to S$ may not be smooth. In particular, we consider subintegral maps. In \cite{SW1}, the author and Weibel have shown that if $f: X=\rm{Spec}(B) \to S=\rm{Spec}(A)$ is a subintegral map (i.e. $A\hookrightarrow B$ is subintegral) then $K_{-n}(f)=0$ for $n>0$ (see Proposition 2.5 of \cite{SW1}). It has also been observed in \cite[Example 6.6]{SW1} that if $S$ is not affine then the above mentioned result may fail. For example, consider $S= \mathbb{P}_{k}^{1}$ and $X= \rm{Spec} (\mathcal{O}_{B})$ where $\mathcal{O}_{B}= \mathcal{O}_{S}\oplus \mathcal{O}(-2)$ with $\mathcal{O}(-2)$ being a square zero ideal. In this situation, $K_{-1}(f)\neq 0$. 
So it is natural to wonder what the groups $K_{-n}(f)$ are for subintegral morphisms with non affine base. This is answered in Section \ref{subint case} by proving the 
following theorem.

\begin{theorem}\label{K-dim}
 Let $f: X\to S$ be a subintegral morphism of noetherian schemes. Assume that $\dim S=d$. Then
 \begin{enumerate}
  \item $K_{-n}(f)=0$ for $n>d$,
  \item $H_{zar}^{d}(S, f_{*}\mathcal{O}^{\times}_X/\mathcal{O}^{\times}_{S})= K_{-d}(f)$,
  \item If $X$ and $S$ are $\mathbb{Q}$-schemes then $H_{et}^{d}(S, f_{*}\mathcal{O}^{\times}_X/\mathcal{O}^{\times}_{S})= K_{-d}(f).$
 \end{enumerate}
\end{theorem}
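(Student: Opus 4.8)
The plan is to realize $K(f)$ as the value on $S$ of a presheaf of spectra on the small Zariski site $S_{zar}$, run its descent spectral sequence, and feed in the affine vanishing already available from \cite[Proposition 2.5]{SW1}. Let $\mathcal{F}$ be the presheaf $U \mapsto K(f^{-1}U \to U)$ (the homotopy fibre of $K(U) \to K(f^{-1}U)$) on $S_{zar}$. Since $f$ is affine and non-connective $K$-theory satisfies Zariski (even Nisnevich) descent, $\mathcal{F}$ satisfies Zariski descent, so $K(f) \simeq \mathbb{H}_{zar}(S,\mathcal{F})$; write $\mathcal{K}_{n}$ for the Zariski sheafification of $U \mapsto K_{n}(f^{-1}U \to U)$. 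The first point is that for every affine open $U \subseteq S$ the restriction $f^{-1}U \to U$ is again a subintegral map of noetherian affine schemes, so $K_{-n}(f^{-1}U \to U) = 0$ for all $n > 0$ by \cite[Proposition 2.5]{SW1}; as affine opens form a basis of $S_{zar}$, this gives $\mathcal{K}_{-n} = 0$ for all $n > 0$.

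Next I would identify $\mathcal{K}_{0}$. Sheafifying the long exact homotopy sequence of the fibre sequence $\mathcal{F} \to K(-) \to K(f^{-1}-)$ yields an exact sequence of Zariski sheaves
\[
\mathcal{K}^{S}_{1} \longrightarrow f_{*}\mathcal{K}^{X}_{1} \longrightarrow \mathcal{K}_{0} \longrightarrow \mathcal{K}^{S}_{0} \longrightarrow f_{*}\mathcal{K}^{X}_{0},
\]
where $\mathcal{K}^{S}_{i}$ and $\mathcal{K}^{X}_{i}$ denote the Zariski $K$-theory sheaves of $S$ and $X$ (and $f$ being a homeomorphism lets one identify the sheafification of $U \mapsto K_{i}(f^{-1}U)$ with $f_{*}\mathcal{K}^{X}_{i}$). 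Since $K_{0}$ of a local ring is $\mathbb{Z}$, the rightmost map is the identity of the rank sheaf $\mathbb{Z}$; and since $SK_{1}$ of a local ring vanishes, $\mathcal{K}^{S}_{1} = \mathcal{O}^{\times}_{S}$ and $f_{*}\mathcal{K}^{X}_{1} = f_{*}\mathcal{O}^{\times}_{X}$, the map between them being induced by the injection $\mathcal{O}_{S} \hookrightarrow f_{*}\mathcal{O}_{X}$. Hence $\mathcal{K}_{0} \cong f_{*}\mathcal{O}^{\times}_{X}/\mathcal{O}^{\times}_{S}$.

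Now the descent spectral sequence $E_{2}^{p,q} = H^{p}_{zar}(S,\mathcal{K}_{-q}) \Rightarrow K_{-p-q}(f)$ is concentrated, by the first paragraph, in $q \le 0$, and by Grothendieck vanishing (as $\dim S = d$) in $0 \le p \le d$; being bounded it converges. A contributing term to $K_{-n}(f)$ has $p+q = n$ with $q \le 0$ and $p \le d$, hence $p \ge n$; if $n > d$ there is none, which is $(1)$, and if $n = d$ the only one is $(p,q) = (d,0)$. The differentials out of $E_{r}^{d,0}$ land in groups $H^{>d}_{zar}(S,-) = 0$, and those into it come from $H^{\ast}_{zar}(S,\mathcal{K}_{1-r}) = 0$ for $r \ge 2$; therefore $E_{\infty}^{d,0} = E_{2}^{d,0}$ and
\[
K_{-d}(f) \;\cong\; H^{d}_{zar}\!\bigl(S,\,\mathcal{K}_{0}\bigr) \;=\; H^{d}_{zar}\!\bigl(S,\, f_{*}\mathcal{O}^{\times}_{X}/\mathcal{O}^{\times}_{S}\bigr),
\]
which is $(2)$ (and matches the $\mathbb{P}^{1}$ computation of \cite[Example 6.6]{SW1}, where $f_{*}\mathcal{O}^{\times}_{X}/\mathcal{O}^{\times}_{S} \cong \mathcal{O}(-2)$ and $H^{1}_{zar}(\mathbb{P}^{1},\mathcal{O}(-2)) \ne 0$).

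For $(3)$, note that Bass $K$-theory does \emph{not} satisfy étale descent, so I would instead deduce it from $(2)$ by proving that $H^{d}_{zar}(S,\mathcal{G}) \cong H^{d}_{et}(S,\mathcal{G})$ for $\mathcal{G} := f_{*}\mathcal{O}^{\times}_{X}/\mathcal{O}^{\times}_{S}$ when $X$ and $S$ are $\mathbb{Q}$-schemes. The idea is to use, Zariski-locally on $S$, the conductor ideal $\mathfrak{c}$ of the subintegral extension together with the exponential isomorphism $1+I \cong I$ for nilpotent ideals $I$ in $\mathbb{Q}$-algebras, to present $\mathcal{G}$ as a finite successive extension of quasi-coherent $\mathcal{O}_{S}$-modules; Grothendieck's comparison theorem then identifies Zariski and étale cohomology of each quasi-coherent subquotient, and since both cohomological dimensions are $\le d$ the functor $H^{d}(S,-)$ is right exact, so the comparison propagates through the extensions, giving $H^{d}_{et}(S,\, f_{*}\mathcal{O}^{\times}_{X}/\mathcal{O}^{\times}_{S}) \cong K_{-d}(f)$. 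I expect the first three paragraphs to be essentially formal once \cite[Proposition 2.5]{SW1} is granted; the main obstacle is this last step, i.e.\ controlling the a priori only multiplicative sheaf $f_{*}\mathcal{O}^{\times}_{X}/\mathcal{O}^{\times}_{S}$ finely enough (via the conductor and the characteristic-zero exponential) to bring it within reach of the quasi-coherent Zariski–étale comparison.
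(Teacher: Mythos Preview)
Your argument for (1) and (2) is essentially the paper's: Zariski descent for $\mathcal{K}(f)$, the stalkwise vanishing $\mathcal{K}_{-n}^{f}=0$ for $n>0$ from \cite[Proposition~2.5]{SW1}, the identification $\mathcal{K}_{0}^{f}\cong f_{*}\mathcal{O}_{X}^{\times}/\mathcal{O}_{S}^{\times}$, and the bound on Zariski cohomological dimension. Your identification of $\mathcal{K}_{0}$ via the sheafified long exact sequence is a correct variant of the paper's citation of the Units--Pic sequence (2.1) of \cite{SW1}.

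For (3) the routes diverge. You propose to filter $\mathcal{G}=f_{*}\mathcal{O}_{X}^{\times}/\mathcal{O}_{S}^{\times}$ by quasi-coherent pieces using conductor ideals and the exponential $1+I\cong I$, and then push the Zariski--\'etale comparison through the filtration via right-exactness of $H^{d}$. The paper instead shows in one stroke that $\mathcal{G}$ is \emph{already} a quasi-coherent sheaf: over $\mathbb{Q}$, the Roberts--Singh homomorphism $\xi_{B/A}\colon B/A\to\mathcal{I}(A,B)$ of \cite{rs,rrs} (built precisely from the exponential, via $I_{B/A}(b)=B[t]\cap A[[t]]e^{bt}$) is an isomorphism for subintegral extensions, and stalkwise this gives $(f_{*}\mathcal{O}_{X}/\mathcal{O}_{S})_{s}\cong(f_{*}\mathcal{O}_{X}^{\times}/\mathcal{O}_{S}^{\times})_{s}$, hence an isomorphism of Zariski sheaves $f_{*}\mathcal{O}_{X}^{\times}/\mathcal{O}_{S}^{\times}\cong f_{*}\mathcal{O}_{X}/\mathcal{O}_{S}$. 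The right-hand side is manifestly quasi-coherent, and then $H^{i}_{zar}=H^{i}_{et}$ for quasi-coherent sheaves (Milne \cite[III, Remark~3.8]{Milne}) yields (3) for \emph{all} $i$, not only $i=d$. So the paper's approach is both stronger and sidesteps the filtration bookkeeping you flag as the main obstacle; your exponential intuition is exactly on target, but the Roberts--Singh theorem is the packaged form of it that endows the multiplicative quotient with a global $\mathcal{O}_{S}$-module structure directly, rather than only after d\'evissage.
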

As a corollary, we obtain $K_{-n}(X)\cong K_{-n}(X_{\rm{sn}})$ for  $n> d$ and $K_{-d}(X) \to K_{-d}(X_{\rm{sn}})$ is surjective, where $X$ is a $d$-dimensional noetherian scheme and $X_{\rm{sn}}$ is the seminormalization of $X$ (see Corollary \ref{seminormalization}).

In Section \ref{relative regularity}, we prove a relative version of Vorst's regularity result that $K_{n}$-regularity implies $K_{n-1}$-regularity. More precisely, we prove 

\begin{theorem}\label{Analouge of Vorst}
 If $f: X=\rm{Spec}(B)\to S=\rm{Spec}(A)$ is $K_{n}$-regular then $f$ is $K_{n-1}$-regular.
\end{theorem}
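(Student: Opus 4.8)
The plan is to carry Vorst's argument for the absolute case over to the relative setting, after first establishing Bass' fundamental theorem for relative $K$-theory. One cannot just apply the absolute theorem to $A$ and to $B$ separately: $K_n$-regularity of $f$ gives no information about $K_n$-regularity of $A$ or $B$, and $K(f)$ is not the $K$-theory of a ring, so the \emph{proof} itself must be relativized.

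For an $A$-algebra $R$ let $f_R\colon\operatorname{Spec}(B\otimes_A R)\to\operatorname{Spec}(R)$ be the base change of $f$, so that $f_{A[t_1,\dots,t_r]}=f\times\mathbb A^{r}$. The first step is to show that the functor $R\mapsto K(f_R)=\operatorname{hofib}\!\bigl(K(R)\to K(B\otimes_A R)\bigr)$ satisfies the non-connective fundamental theorem, i.e. there is a decomposition
\[
K\!\bigl(f_{R[t,t^{-1}]}\bigr)\;\simeq\;K(f_R)\,\vee\,\Sigma K(f_R)\,\vee\,NK(f_R)\,\vee\,NK(f_R),
\]
natural in $R$, where $NK(f_R)=\operatorname{hocofib}\!\bigl(K(f_R)\to K(f_{R[t]})\bigr)$. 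This is immediate from the fundamental theorem for $K(R)$ and for $K(B\otimes_A R)$ together with the naturality of that decomposition and the fact that homotopy fibres commute with it. In particular $K_{n-1}(f_R)$ is a natural direct summand of $K_n(f_{R[t,t^{-1}]})$, and $NK_n(f_{R[t,t^{-1}]})\cong NK_n(f_R)\oplus NK_{n-1}(f_R)\oplus N^2K_n(f_R)\oplus N^2K_n(f_R)$.

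Two formal reductions follow. First, if $f$ is $K_n$-regular then $f\times\mathbb A^{r}$ is $K_n$-regular for every $r\ge 0$, hence every iterated nil term $N^jK_n(f\times\mathbb A^{r})$ vanishes. Second, it is therefore enough to prove the one-variable statement $NK_{n-1}(f)=0$ for every $K_n$-regular $f$; applying this to the family $f\times\mathbb A^{r}$ produces a chain of isomorphisms $K_{n-1}(f)\xrightarrow{\ \sim\ }K_{n-1}(f\times\mathbb A^{1})\xrightarrow{\ \sim\ }\cdots$, which is exactly $K_{n-1}$-regularity of $f$. So assume $f$ is $K_n$-regular. By the fundamental theorem above, $NK_{n-1}(f)$ is a natural direct summand of $NK_n(f\times\mathbb G_m)$, and since $K_n$-regularity of $f$ kills the summands $NK_n(f)$ and $N^2K_n(f)$, one in fact has $NK_n(f\times\mathbb G_m)\cong NK_{n-1}(f)$. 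The vanishing of this group is precisely the content of Vorst's argument, which relativizes here once one also has the natural module structure of the relative nil groups $NK_\ast(f\times-)$ over the big Witt vectors $W(-)$ — inherited from the $W$-module structures on $NK_\ast$ of the source and target; with that structure in place, Vorst's argument applied to the functor $K(f\times-)$ gives $NK_{n-1}(f)=0$.

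The step I expect to be the main obstacle is assembling this whole package in the relative setting: the fundamental theorem itself is routine, but one has to check that the Witt-vector operations and all the splittings used by Vorst are natural and mutually compatible for $K(f\times-)$, so that his argument transfers without change. A further technical point that must be handled with care is the non-connectivity of $K(f\times-)$: one has to use the non-connective form of the fundamental theorem throughout, which is also what allows the conclusion to hold uniformly in $n$, in particular for $n\le 0$.
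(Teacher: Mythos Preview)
Your proposal is correct and follows essentially the same route as the paper: relativize Bass' fundamental theorem, reduce to showing $NK_{n-1}(f)=0$, and then run Vorst's Witt-vector localization trick on the functor $R\mapsto K(f_R)$. The paper carries this out by making the localization step explicit---it shows $NK_n(f[s])_{[s]}\cong NK_n(f[s,1/s])$ by a diagram chase from the absolute isomorphism along the long exact sequence $\mathrm{Seq}(NK_n,f[s])$, viewed as a sequence of $W(A[s])$-modules---and it cites \cite{SW1} both for the relative fundamental theorem and for the $W(A[s])$-module structure on $NK_n(f[s])$, which is exactly the ``package'' you flagged as the main obstacle.
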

 As a consequence, we show that if $f: A\hookrightarrow B$ is a non-trivial subintegral ring extension then $f$ cannot be $K_{n}$-regular(see Proposition \ref{reg for subintegral}).  
 \medskip
 

{\bf Acknowledgements:} The author is grateful to Charles Weibel for his valuable comments and various suggestions during the preparation of this article. He would also like to thank Omprokash Das for some useful discussions. Further, he would also like to thank the referee for useful comments and suggestions.

\section{preliminaries}\label{basic}

\subsection*{Subintegral and Seminormal extension} A commutative ring extension $A\hookrightarrow B$ is subintegral if $B$ is integral over $A$ and $\rm{Spec}(B) \to \rm{Spec}(A)$ is a bijection inducing isomorphisms on all residue fields. We say that $A\hookrightarrow B$ is seminormal (or $A$ is seminormal in $B$) if there is no subextension $A\subset C \subset B$ with $C\neq A$ and $A\subset C$ is subintegral (equivalently, whenever $b \in B$ and $b^2, b^3 \in A$ then $b\in A).$ More details can be found in \cite{rs}, \cite{swan}.

\subsection*{Relative K-groups} 
Given a map $f : X\to S$ of schemes, $K_{n}(f)= \pi_{n}K(f)$, where $K(f)$ is the homotopy fiber of $K(S)\to K(X).$ 
These relative K-groups fit into the following exact sequence 
 \begin{equation}\label{kgroup}
    \dots \rightarrow K_{n}(f)\rightarrow K_{n}(S)\rightarrow K_{n}(X)\rightarrow K_{n-1}(f)\rightarrow K_{n-1}(S)\rightarrow \dots .
   \end{equation}
   Let $NK_{n}(f)= \ker [K_{n}(f[t])\stackrel{t\mapsto 0}\to K_{n}(f)],$ where $f[t]: X\times \mathbb{A}^{1} \to S \times \mathbb{A}^{1}.$ We have a natural decomposition $K_{n}(f[t])\simeq K_{n}(f) \oplus NK_{n}(f).$ By iterating the operation $N,$ we obtain $K_{n}(f[t_{1}, \dots, t_{r}]])\simeq (1\oplus N)^{r}K_{n}(f).$
   For details see \cite{b}, \cite{wei 1}.
\subsection*{Relative Homotopy K-groups} Let $\Delta^{n}= \rm{Spec} (\mathbb{Z}[t_{0}, t_{1}, \dots, t_{n}]/(t_{0}+ t_{1} +\dots +t_{n} -1))$. Then the $n$-th homotopy K-group of a scheme $X$ is $KH_{n}(X)= \pi_{n} (KH(X))$, where $n \in \mathbb{Z}$ and $KH(X)= \rm{hocolim}_{j}~ K(X \times \Delta^{j})$. For a map of schemes $f: X \to S,$ let $KH(f)$ be the homotopy fiber of $KH(S) \to KH(X).$ In fact, $KH(f)= \rm{hocolim}_{j}~ K(f \times \Delta^{j})$ by Lemma 5.19 of \cite{Thomason}. Then for $n \in \mathbb{Z}$, the n-th relative homotopy K-group of $f$ is $KH_{n}(f)= \pi_{n}(KH(f)).$   The relative homotopy K-groups fit into the following exact sequence $\rm{Seq}(KH_{n}, f)$
 \begin{equation}\label{khgroup}
    \dots \rightarrow KH_{n}(f)\rightarrow KH_{n}(S)\rightarrow KH_{n}(X)\rightarrow KH_{n-1}(f)\rightarrow KH_{n-1}(S)\rightarrow \dots .
   \end{equation}    For more details, we refer to \cite{weiKH} and  Chapter IV.12 of \cite{wei 1}.
   
   \begin{remark}\label{K=KH}\rm{
    For a scheme $X$, there is a  natural map $K(X) \to KH(X).$ Therefore, we get a natural map $K(f) \to KH(f)$ for any map $f: X \to S$ of schemes. In particular, there are natural maps $K_{n}(f)\to KH_{n}(f)$ for all $n$. For every scheme $X$, $KH_{n}(X)\cong KH_{n}(X \times \mathbb{A}^{t})$ for all $n$ and $t\geq 0.$ It is also well known that for a regular scheme $X,$ $K_{n}(X)\cong KH_{n}(X)$ for all $n.$ Using the exact sequences (\ref{kgroup}) and (\ref{khgroup}),} the following facts are easy to check:
    \begin{enumerate}
     \item If $X$ and $S$ are regular schemes then $K_{n}(f)\cong KH_{n}(f)$ for all $n.$ 
     \item (Homotopy Invariance) $KH_{n}(f)\cong KH_{n}(f \times \mathbb{A}^{t})$ for all $n$ and all $t$.
    \end{enumerate} \end{remark}

\section{Relative negative K-theory of smooth, affine maps}\label{using KST}
In this section, we prove Theorem \ref{smooth affine}, which is a vanishing and regularity result for relative negative $K$-groups of a smooth, affine map. To prove this, we need some preparations. Let us begin with the following observation.

\begin{lemma}\label{basic reduction}
 Let $f: X \to S$ be a map of noetherian schemes with $\dim S=d.$ Then the following are true:
 \begin{enumerate}
  \item for $n>d,$ $K_{-n}(X)=0$ if and only if $K_{-n-1}(f)=0.$ 
  \item for $n\geq d,$ $X$ is $K_{-n}$-regular if and only if $f$ is $K_{-n-1}$-regular.
 \end{enumerate}

\end{lemma}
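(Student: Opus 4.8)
The plan is to feed Weibel's conjecture for the base $S$ into the relative long exact sequence $\mathrm{Seq}(K_{n},f)$, i.e.\ \eqref{kgroup}. Since $\dim S = d$, the theorem of Kerz--Strunk--Tamme \cite{KST} gives $K_{-m}(S)=0$ for all $m>d$ and $S$ is $K_{-m}$-regular for all $m\ge d$; these are the only inputs beyond formal homological algebra.

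For part (1), I would fix $n>d$ and look at the segment
\begin{equation*}
K_{-n}(S)\longrightarrow K_{-n}(X)\longrightarrow K_{-n-1}(f)\longrightarrow K_{-n-1}(S)
\end{equation*}
of \eqref{kgroup}. Since $n>d$ and $n+1>d$, the two outer groups vanish, so the middle map is an isomorphism $K_{-n}(X)\cong K_{-n-1}(f)$; in particular one group is zero exactly when the other is.

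For part (2) I would \emph{not} attempt a direct five-lemma comparison of the long exact sequences of $f$ and $f\times\mathbb{A}^{r}$, since that seems to require the (unavailable) injectivity of $K_{-n-1}(X)\to K_{-n-1}(X\times\mathbb{A}^{r})$. Instead, the plan is to pass to the Bass nil functors. For a scheme $Y$ there is a natural splitting under which $K(Y)$ is a direct summand of $K(Y\times\mathbb{A}^{r})$ with complement a finite sum of copies of the iterated nil spectra $N^{j}K(Y)$ for $1\le j\le r$; hence $Y$ is $K_{m}$-regular iff $\pi_{m}N^{j}K(Y)=0$ for all $j\ge1$ (see \cite{wei 1}). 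Since $N^{j}$ is built from homotopy fibres, it is compatible with the homotopy fibre defining $K(f)$, so the relative nil spectra $N^{j}K(f):=\mathrm{hofib}\bigl(N^{j}K(S)\to N^{j}K(X)\bigr)$ are also the $j\ge1$ summands of $K(f\times\mathbb{A}^{r})$, whence $f$ is $K_{m}$-regular iff $\pi_{m}N^{j}K(f)=0$ for all $j\ge1$. For $n\ge d$ and each $j\ge1$ the long exact sequence of $N^{j}K(f)\to N^{j}K(S)\to N^{j}K(X)$ reads
\begin{equation*}
\cdots\to\pi_{-n}N^{j}K(S)\to\pi_{-n}N^{j}K(X)\to\pi_{-n-1}N^{j}K(f)\to\pi_{-n-1}N^{j}K(S)\to\cdots,
\end{equation*}
and the two outer groups vanish because $S$ is $K_{-n}$- and $K_{-n-1}$-regular ($n,\,n+1\ge d$). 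Thus $\pi_{-n}N^{j}K(X)\cong\pi_{-n-1}N^{j}K(f)$ for every $j\ge1$, and therefore $X$ is $K_{-n}$-regular $\iff$ $f$ is $K_{-n-1}$-regular.

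The substantive ingredient is the Kerz--Strunk--Tamme theorem for $S$; everything else is bookkeeping with \eqref{kgroup}. The step I expect to require the most care is part (2): one must work with the split nil functors $N^{j}K$ rather than with $K$ itself, precisely to sidestep the missing injectivity statement above, and one must invoke Weibel's conjecture in the strong form ``$S$ is $K_{-m}$-regular for all $m\ge d$'' (which is what \cite{KST} supplies, cf.\ Theorem \ref{relative=absolute}(2)), not merely $K_{-d}$-regularity.
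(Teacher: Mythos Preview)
Your argument is correct and matches the paper's approach: invoke Kerz--Strunk--Tamme for $S$ to kill the outer terms in \eqref{kgroup} for part (1), and apply $N^{i}$ to \eqref{kgroup} and use the $K_{-n}$-regularity of $S$ for $n\ge d$ for part (2). The paper's proof is terser (``apply $N^{i}$ to the sequence (\ref{kgroup})''), but your elaboration---in particular the explicit observation that one must work with the split nil functors rather than attempt a direct five-lemma comparison---is exactly the intended content.
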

\begin{proof}
 By Theorem B of \cite{KST}, $K_{-n}(S)=0$ for $n>d$ and $S$ is $K_{-n}$-regular for $n\geq d.$ Now the first assertion follows from the long exact sequence (\ref{kgroup}). For the second assertion, apply $N^r$ to the sequence (\ref{kgroup}) and use the fact $S$ is $K_{-n}$-regular for $n\geq d$.
\end{proof}

\begin{lemma}\label{0 dim case}
 Let $f: X \to S$ be a smooth map of noetherian schemes with $\dim S=0.$ Assume that $S$ is reduced. Then $K_{-n}(f)=0$ for $n>1$ and $f$ is $K_{-n}$-regular for $n>0$. 
\end{lemma}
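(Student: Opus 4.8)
\emph{Proof plan.} The plan is to reduce the statement to two standard facts: that the negative $K$-theory of a regular noetherian scheme vanishes, and that $K$ and $KH$ agree on regular schemes.

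First I would note that a reduced noetherian scheme of dimension $0$ is a finite disjoint union of spectra of fields, hence is regular. Since $f$ is smooth, $X$ is smooth over these fields and is therefore regular; for the same reason $S\times\mathbb{A}^r$ (a finite disjoint union of affine spaces over fields) and $X\times\mathbb{A}^r$ (smooth over a field) are regular as well. So all of $S$, $X$, $S\times\mathbb{A}^r$, $X\times\mathbb{A}^r$ are regular noetherian schemes.

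Next, for the vanishing, I would use that $K_{-n}(Y)=0$ for $n\ge 1$ whenever $Y$ is regular noetherian. Thus $K_{-n}(S)=K_{-n}(X)=0$ for $n\ge 1$, and feeding this into the long exact sequence (\ref{kgroup}) gives, for each $n\ge 1$,
\[
0=K_{-n}(X)\longrightarrow K_{-n-1}(f)\longrightarrow K_{-n-1}(S)=0,
\]
so $K_{-n}(f)=0$ for $n>1$.

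For the regularity statement I would invoke Remark \ref{K=KH}(1): since $X$ and $S$ are regular, $K(f)\to KH(f)$ is an equivalence, and likewise $K(f\times\mathbb{A}^r)\to KH(f\times\mathbb{A}^r)$ is an equivalence because $X\times\mathbb{A}^r$ and $S\times\mathbb{A}^r$ are regular. Since the comparison $K\to KH$ is natural, the square relating the projection-pullback map $K(f)\to K(f\times\mathbb{A}^r)$ to $KH(f)\to KH(f\times\mathbb{A}^r)$ commutes, and the latter map is an equivalence by homotopy invariance of $KH$ (Remark \ref{K=KH}(2)). Hence $K_{-n}(f)\to K_{-n}(f\times\mathbb{A}^r)$ is an isomorphism for every $n$; in particular $f$ is $K_{-n}$-regular for $n>0$.

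All steps here are soft, so I do not expect a serious obstacle; the only point demanding a little care is that the isomorphism produced in the last step is genuinely the canonical comparison map, which is just naturality of $K\to KH$. If one prefers to avoid $KH$, the regularity part can be carried out by hand: $K_{-1}(S)=0$ makes (\ref{kgroup}) identify $K_{-1}(f)$ with $\coker\bigl(K_0(S)\to K_0(X)\bigr)$, and $K_0$-regularity of the regular schemes $S$, $X$ together with naturality of the homotopy-invariance isomorphisms identifies this group with $K_{-1}(f\times\mathbb{A}^r)$; the groups $K_{-n}(f)$ for $n\ge 2$ already vanish.
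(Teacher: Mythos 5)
Your proof is correct and takes essentially the same route as the paper's: both rest on the observation that a reduced zero-dimensional noetherian scheme is a finite disjoint union of spectra of fields, so that $S$, $X$, and their products with $\mathbb{A}^r$ are all regular, after which the vanishing and the regularity both fall out of the long exact sequence (\ref{kgroup}). The only cosmetic difference is in the regularity step, where the paper compares the sequences for $f$ and $f\times\mathbb{A}^r$ directly using that regular schemes are $K_n$-regular, while your main route passes through the $K\to KH$ equivalence of Remark \ref{K=KH}; your ``by hand'' alternative is exactly the paper's argument.
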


\begin{proof}

 Since $S$ is a noetherian reduced scheme of dimension $0,$ $S= \rm{Spec}(k_{1}) \sqcup \dots \sqcup\rm{Spec}(k_{n}).$ Here each $k_{i}$ is a field. So, we can write $X$ as a finite disjoint union of open subschemes $f^{-1}(\rm{Spec}(k_{i}))$ which are regular. Hence $X$ is regular. Then $X$ is $K_{n}$-regular for all $n$ and $K_{-n}(X)=0$ for $n>0.$ We also have $K_{-n}(S)=0$ for $n>0$ and $S$ is $K_{-n}$-regular for $n\geq 0.$ Therefore by (\ref{kgroup}), $K_{-n}(f)=0$ for $n>1$ and $f$ is $K_{-n}$-regular for $n>0$. 
\end{proof}

Let $\mathbb{K}$ be the presheaf of spectra of non-connective Bass $K$-theory on $S_{Zar}.$   For a morphism of schemes $f: X \to S$, let $\mathbb{K}^{f}$ be the presheaf of spectra on $S_{Zar},$ defined as $$\mathbb{K}^{f}(U)= \rm{hofib}[ \mathbb{K}(U) \to \mathbb{K}(X \times_{S} U)]. $$ 
Similarly, we can define the nil presheaf of spectra $N^{r}\mathbb{K}^{f}$ on $S_{Zar}$ for $r>0.$

Let $F$ be a presheaf of spectra on a scheme $X.$ We say that $F$ has the Mayer-Vietoris property ( for the Zariski topology on $X$) if for every pair of open subschemes $U$ and $V$ the following square is homotopy cartesian:$$\begin{CD}
     F(U\cup V) @>>> F(U)\\
    @VVV  @VVV \\                 
    F(V) @>>> F(U\cap V)
  \end{CD}$$

We say that $F$ satisfies Zariski descent if it has the Mayer-Vietoris property. For more details, we refer to Chapter V.10 of \cite{wei 1}. 

\begin{lemma}\label{zariskidescent}
 $\mathbb{K}^{f}$ and $N^{r}\mathbb{K}^{f}$ satisfy Zariski descent.
\end{lemma}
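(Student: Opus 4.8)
The plan is to prove that both $\mathcal{K}(f)$ and the nil presheaves $N^i\mathcal{K}(f)$ satisfy Zariski descent by reducing to the well-known fact that the non-connective $K$-theory presheaf of spectra satisfies Zariski descent (Thomason–Trobaugh). First I would recall that for any scheme $Y$ the presheaf of spectra $\mathcal{K}(Y)$ on $Y$ satisfies Zariski descent, equivalently that for a Zariski open cover the Mayer–Vietoris square of non-connective $K$-theory spectra is homotopy cartesian; this is Thomason–Trobaugh. I would also note that satisfying Zariski descent is equivalent to sending elementary Nisnevich (here, elementary Zariski) squares to homotopy cartesian squares, and that this property is stable under taking homotopy fibers of maps of presheaves levelwise.

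The key observation is that the construction $U \mapsto \mathcal{K}(f)(U) = \operatorname{hofib}[\mathcal{K}(S)(U) \to \mathcal{K}(X)(X\times_S U)]$ is the levelwise homotopy fiber of a map of presheaves of spectra on $S$, namely the map $\mathcal{K}(S) \to f_*\mathcal{K}(X)$ where $f_*\mathcal{K}(X)$ is the presheaf $U \mapsto \mathcal{K}(X)(X\times_S U)$. So I would argue in two steps: (i) $\mathcal{K}(S)$ satisfies Zariski descent on $S$ by Thomason–Trobaugh; (ii) $f_*\mathcal{K}(X)$ satisfies Zariski descent on $S$. For (ii), given a Zariski open cover $U = U_1 \cup U_2$ of an open $U \subseteq S$, pulling back along $f$ gives a Zariski open cover $X\times_S U = (X\times_S U_1) \cup (X\times_S U_2)$ of the open subscheme $X\times_S U$ of $X$, with intersection $X\times_S(U_1\cap U_2)$; since $\mathcal{K}(X)$ satisfies Zariski descent on $X$, the resulting square is homotopy cartesian, which is exactly the descent condition for $f_*\mathcal{K}(X)$ over $S$. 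Then the homotopy fiber $\mathcal{K}(f)$ of $\mathcal{K}(S) \to f_*\mathcal{K}(X)$ satisfies Zariski descent because a levelwise homotopy fiber of descent-satisfying presheaves again satisfies descent — concretely, the homotopy fiber of a map of homotopy cartesian squares is homotopy cartesian.

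For the nil presheaves $N^i\mathcal{K}(f)$ the argument is identical: $N^i\mathcal{K}(X)$ is a direct summand (the augmentation-complementary summand) of $\mathcal{K}(X\times\mathbb{A}^i)$, hence satisfies Zariski descent on $X$ since $\mathcal{K}$ does and products with $\mathbb{A}^i$ commute with Zariski open covers and with the splitting; so $N^i\mathcal{K}(f) = \operatorname{hofib}[N^i\mathcal{K}(S) \to f_* N^i\mathcal{K}(X)]$ is again a levelwise homotopy fiber of descent presheaves, and the same reasoning applies. I do not expect a genuine obstacle here; the only thing to be careful about is the bookkeeping that pullback along $f$ sends a Zariski elementary square over $S$ to a Zariski elementary square over $X$ (which is immediate since open immersions are stable under base change and $X\times_S(U_1\cap U_2) = (X\times_S U_1)\cap(X\times_S U_2)$ inside $X\times_S U$), and the standard fact that the class of presheaves of spectra satisfying descent for a given topology is closed under levelwise homotopy limits, in particular homotopy fibers.
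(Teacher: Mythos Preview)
Your proposal is correct and follows essentially the same approach as the paper: write $\mathcal{K}(f)$ as the homotopy fiber of $\mathcal{K}(S)\to f_*\mathcal{K}(X)$, use Thomason--Trobaugh descent for $\mathcal{K}$ (the paper cites this via Weibel's K-book) to get descent for both $\mathcal{K}(S)$ and $f_*\mathcal{K}(X)$, and conclude by closure of descent under homotopy fibers; then repeat for $N^i\mathcal{K}(f)$. Your write-up is in fact more detailed than the paper's, which simply asserts that $f_*$ preserves Zariski descent and invokes an exercise in \cite{wei 1} for the fiber step.
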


\begin{proof}
  We have a sequence of presheaves of spectra on $S$, 
 $$\mathbb{K}^{f} \to \mathbb{K} \to f_{*}\mathbb{K}.$$
  It is easy to check that if $\mathbb{K}$ satisfies Zariski descent then $f_{*}\mathbb{K}$ does too. By Corollary V.7.10 of \cite{wei 1}, $\mathbb{K}$ satisfies Zariski descent. Then $\mathbb{K}^{f}$ satisfies Zariski descent (see  Exercise V.10.1 of \cite{wei 1}). By a similar argument, $N^{r}\mathbb{K}^{f}$ satisfies Zariski descent. \end{proof}

For a morphism of schemes $f: X \to S$, let $\mathcal{K}_{n,zar}^{f}$ be the Zariski sheafification of the presheaf $U\mapsto K_{n}(f|_{U})$. Here $f|_{U}$ is the map $f^{-1}(U) \to U.$

\begin{lemma}\label{reduction to reduced base}
 Let $f: X\to S$ be an affine map of noetherian schemes with $\dim S=d.$ Then the canonical map $K_{-n}(f) \to K_{-n}(f \times_{S} S_{red})$ is an isomorphism for $n>d,$ where $f \times_{S} S_{red}: X\times_{S} S_{red} \to S_{red}.$
\end{lemma}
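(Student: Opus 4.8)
The plan is to compare the two relative $K$-theory spectra via a Zariski-local argument, reducing the global statement to the affine case where one can invoke the known vanishing of relative negative $K$-theory along a nilpotent thickening. Concretely, write $i: S_{red} \hookrightarrow S$ for the closed immersion and $f': X\times_S S_{red} \to S_{red}$ for the base change of $f$. Since $f$ is affine, $X\times_S S_{red}$ is obtained from $X$ by quotienting by the nilradical ideal sheaf, so there is a commutative square relating $\mathcal{K}(f)$ and $i_*\mathcal{K}(f')$ as presheaves of spectra on $S$ (and likewise for the $\mathcal{K}$-sheaves on $S$). By Lemma \ref{zariskidescent}, $\mathcal{K}(f)$ satisfies Zariski descent, and the same argument applies to $\mathcal{K}(f')$; hence there are Brown--Gersten (descent) spectral sequences
\begin{equation*}
E_2^{p,q} = H^p_{zar}(S, \mathcal{K}^f_{-q}) \Longrightarrow K_{-p-q}(f), \qquad
E_2^{p,q} = H^p_{zar}(S, \mathcal{K}^{f'}_{-q}) \Longrightarrow K_{-p-q}(f'),
\end{equation*}
compatible with the canonical comparison map (note that $S$ and $S_{red}$ have the same underlying space, so the cohomology is computed on the same site). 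These spectral sequences are bounded because $\dim S = d$, so $H^p_{zar} = 0$ for $p > d$; thus for $n > d$ the group $K_{-n}(f)$ has a finite filtration with graded pieces subquotients of $H^p_{zar}(S, \mathcal{K}^f_{-n+p})$ with $0 \le p \le d$, and similarly for $f'$.

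The key reduction is therefore to show that the map of sheaves $\mathcal{K}^f_{m} \to \mathcal{K}^{f'}_{m}$ on $S$ is an isomorphism for all $m < 0$ (in fact for $m \le -1$ suffices, but one gets all $m<0$ for free). Since isomorphism of sheaves is a stalk-local condition, and stalks of these sheaves at a point $s\in S$ are computed as filtered colimits over affine opens $U = \mathrm{Spec}(A)$ containing $s$, it is enough to prove that $K_m(U, f^{-1}U) \to K_m(U_{red}, f^{-1}U \times_U U_{red})$ is an isomorphism for $m < 0$ when $U$ is affine. But $U_{red} \to U$ is a nilpotent (hence subintegral) closed immersion, and since $f$ is affine, $f^{-1}U \times_U U_{red} \to f^{-1}U$ is likewise a nilpotent closed immersion of affine schemes. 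So this follows from the known vanishing of relative negative $K$-theory of subintegral maps of affine schemes — precisely Proposition 2.5 of \cite{SW1} cited in the introduction — applied to both $f^{-1}U \times_U U_{red} \hookrightarrow f^{-1}U$ and $U_{red}\hookrightarrow U$, together with the long exact sequence (\ref{kgroup}) comparing the two relative theories; a short diagram chase then gives that $K_m(f|_U) \cong K_m(f'|_{U_{red}})$ for $m<0$.

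Granting that the sheaf maps $\mathcal{K}^f_m \to \mathcal{K}^{f'}_m$ are isomorphisms for all $m<0$, the comparison map of descent spectral sequences is an isomorphism on all $E_2$-terms contributing to total degree $-n$ with $n > d$ (since in that range only columns $0 \le p \le d$ appear, and the relevant rows have $-q = -n+p \le -n+d < 0$). By the comparison theorem for spectral sequences, the induced map $K_{-n}(f) \to K_{-n}(f')$ is an isomorphism for $n > d$, which is the assertion. The main obstacle I anticipate is the bookkeeping in the diagram chase establishing the stalk-level isomorphism: one must be careful that \emph{both} the base thickening $U_{red}\hookrightarrow U$ and the total-space thickening are handled, so that the relative terms (which measure the failure of $f$ to behave like $f'$) vanish in negative degrees; the affine subintegral vanishing result does exactly this, but one should check that the nilpotent thickening $f^{-1}U\times_U U_{red}\hookrightarrow f^{-1}U$ is indeed subintegral (it is, being a surjective closed immersion defined by a nilpotent ideal, which is automatically a bijection inducing isomorphisms on residue fields).
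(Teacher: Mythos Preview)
Your overall architecture is exactly the paper's: reduce to the affine case, show there that the comparison map on relative negative $K$-groups is an isomorphism, conclude that the Zariski sheaves $\mathcal{K}^f_m$ and $\mathcal{K}^{\tilde f}_m$ agree for $m<0$, and then compare the two Zariski descent spectral sequences using that cohomological dimension is at most $d$. So strategically there is nothing to object to.

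There is, however, one genuine slip in your affine step. You assert that the closed immersion $U_{red}\hookrightarrow U$ (and likewise $f^{-1}U\times_U U_{red}\hookrightarrow f^{-1}U$) is ``nilpotent, hence subintegral'' and then invoke Proposition~2.5 of \cite{SW1}. But in this paper (and in \cite{SW1}) a subintegral map is by definition a \emph{faithful} affine morphism: on rings it is an \emph{injective} extension $A\hookrightarrow B$. A nilpotent closed immersion corresponds to a \emph{surjection} $A\twoheadrightarrow A/\mathrm{nil}(A)$, so it is not subintegral in this sense, and Proposition~2.5 of \cite{SW1} does not apply. The property you correctly observe---that $\mathrm{Spec}$ is a bijection inducing isomorphisms on residue fields---is necessary for subintegrality but not sufficient; injectivity of the ring map is part of the definition.

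The fix is immediate and is precisely what the paper does: in the affine case one uses directly the nil-invariance of nonpositive $K$-theory, i.e.\ $K_{-n}(R)\cong K_{-n}(R/I)$ for $n\ge 0$ whenever $I$ is a nil ideal. Applying this to both $A\to A/\mathrm{nil}(A)$ and $B\to B/\mathrm{nil}(A)B$ (the latter ideal is nil because $f$ is affine), and comparing the two long exact sequences $\mathrm{Seq}(K_n,f)$ and $\mathrm{Seq}(K_n,\tilde f)$ via the five lemma, yields $K_{-n}(f)\cong K_{-n}(\tilde f)$ for $n>0$. The rest of your argument then goes through verbatim.
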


\begin{proof}
 Write $\tilde f$ for $f \times_{S} S_{red}.$ First we suppose that $X= \rm{Spec}(B)$ and $S= \rm{Spec}(A).$ Then $X \times_{S} S_{red}= \rm{Spec}(B \otimes_{A}A/nil(A)).$ Note that $\rm{nil}(A)B$ is a nil ideal of $B$. Then by comparing sequences (see (\ref{kgroup}))  for $ f$ and $\tilde f$, we get $K_{-n}(f)\cong K_{-n}(\tilde f)$ for $n>0$ because for any ring $R,$ $K_{-n}(R)\cong K_{-n}(R/I)$ for $n\geq 0$ with $I$ a nil ideal. Now by looking at the stalk level it is easy to see that $\mathcal{K}_{n}^{f}\cong \mathcal{K}_{n}^{\tilde f}$ for all $n<0$ as a Zariski sheaf on $S.$ There is a canonical map of Zariski descent spectral sequences  (Theorem 10.3 of \cite{TT}),
 
 $$E_{2}^{p. q}= H^{p}(S, \mathcal{K}_{-q, zar}^{f})\Rightarrow K_{-p-q}(f)$$ to 
 $$E_{2}^{p. q}= H^{p}(S_{red}, \mathcal{K}_{-q, zar}^{\tilde f})\Rightarrow K_{-p-q}(\tilde f),$$ which is an isomorphism on $E_{2}^{p, q}$ page for $q> 0.$ Moreover, the Zariski cohomological dimension of $S$ is at most $d$. Hence the result. \end{proof}

\begin{remark}\label{may fail for nonaffine}\rm{
 Lemma \ref{reduction to reduced base} may fail if $f$ is not an affine map. Consider $X= C \times_{k} S,$ where $C$ is a smooth, projective curve of genus $g>0$ over a field $k$ and $S= \rm{Spec} (k[s]/s^2).$ Since $K_{-n}(S) \cong K_{-n}(S_{red})$ for $n\geq 0,$ we get $K_{-n}(f)\cong K_{-n}(f_{red})$ for $n>0$ if and only if $K_{-n}(X)\cong K_{-n}(X\times_S S_{red})$ for $n\geq 0.$ In this case $X\times_S S_{red}=C$ and  $\Pic(X) \ncong \Pic(X\times_S S_{red})= \Pic (C).$  Indeed, $\mathcal{O}_{X}^{\times}\cong \mathcal{O}_{C}^{\times} \times \mathcal{O}_{C}$ as a sheaf of abelian groups and thus $\Pic(X)\cong \Pic(C) \times H^{1}(C,\mathcal{O}_{C}).$ Here $H^{1}(C,\mathcal{O}_{C})$ is a $g$-dimensional $k$-vector space. Therefore, $K_{0}(X) \ncong K_{0}(X\times_S S_{red})= K_{0}(C).$} 
\end{remark}

\begin{lemma}\label{reduction to affine}
 Let $f: X \to S$ be a map of noetherian schemes. Suppose $\dim S=d.$ Write $f_{s}$ for the map $X \times_S \mathcal{O}_{S,s} \to \mathcal{O}_{S,s},$ $s\in S.$ Then the following are true: 
 \begin{enumerate}
  \item If $K_{-n}(f_{s})=0$ for all $s\in S$ with $n>\dim \rm{Spec}(\mathcal{O}_{S,s}) +1$ then $K_{-n}(f)=0$ for $n> d+1.$ 
  \item If $N^{r}K_{-n}(f_{s})=0$ for all $s\in S$ with $n> \dim \rm{Spec}(\mathcal{O}_{S,s})$ and $r>0$ then $N^{r}K_{-n}(f)=0$ for $n>d$ and $r>0.$
 \end{enumerate}
 \end{lemma}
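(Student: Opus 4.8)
The plan is to run the Zariski descent spectral sequences of \cite{TT} for the presheaves of spectra $\mathcal{K}(f)$ and $N^{i}\mathcal{K}(f)$ on $S$ --- both of which satisfy Zariski descent by Lemma \ref{zariskidescent} --- exactly as in the proof of Lemma \ref{reduction to reduced base}, and then to kill the relevant $E_{2}$-terms using the hypotheses on the local maps $f_{s}$. The first ingredient I would establish is the stalk computation: since nonconnective $K$-theory, and therefore relative $K$-theory together with its nil parts $N^{i}$, commutes with the filtered colimit of rings (resp.\ schemes) defining $\mathcal{O}_{S,s}$ and $X\times_{S}\mathcal{O}_{S,s}$ (see \cite{TT}), the stalk of $\mathcal{K}_{m}^{f}$ at a point $s\in S$ is $\varinjlim_{s\in U}K_{m}(U, f^{-1}U)=K_{m}(f_{s})$, and likewise the stalk of $N^{i}\mathcal{K}_{m}^{f}$ at $s$ is $N^{i}K_{m}(f_{s})$; here $N^{i}\mathcal{K}_{m}^{f}$ denotes the Zariski sheafification of $U\mapsto N^{i}K_{m}(U, f^{-1}U)$.

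Next I would translate the hypotheses into statements about supports of sheaves. Under the hypothesis of (1), for $q\geq 2$ the stalk $(\mathcal{K}_{-q}^{f})_{s}=K_{-q}(f_{s})$ vanishes whenever $\dim\mathcal{O}_{S,s}<q-1$; since $\dim\overline{\{s\}}+\dim\mathcal{O}_{S,s}\leq d$ for every $s\in S$, this says $\mathcal{K}_{-q}^{f}$ is supported on $\{s:\dim\overline{\{s\}}\leq d-q+1\}$. Similarly, under the hypothesis of (2), for $q\geq 1$ the sheaf $N^{i}\mathcal{K}_{-q}^{f}$ is supported on $\{s:\dim\overline{\{s\}}\leq d-q\}$. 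I would then invoke the support-sharpened form of Grothendieck's vanishing theorem: if a sheaf of abelian groups $\mathcal{G}$ on $S$ has vanishing stalk at every $s$ with $\dim\overline{\{s\}}>e$, then $H^{p}(S,\mathcal{G})=0$ for $p>e$. This can be proved by writing $\mathcal{G}$ as the filtered colimit of the subsheaves generated by finitely many of its local sections: each such subsheaf is supported on a closed subset of dimension $\leq e$ (the closure in $S$ of the support of a section over an open $U$ is closed, and its irreducible components have generic points already lying in that support, hence in $\{s:\dim\overline{\{s\}}\leq e\}$), so the usual Grothendieck vanishing applies to it, and $H^{p}(S,-)$ commutes with filtered colimits on the noetherian space $S$. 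Applied to our sheaves this gives $H^{p}(S,\mathcal{K}_{-q}^{f})=0$ for $p>d-q+1$ and $H^{p}(S,N^{i}\mathcal{K}_{-q}^{f})=0$ for $p>d-q$.

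Finally I would feed these vanishings into the spectral sequences $E_{2}^{p,q}=H^{p}(S,\mathcal{K}_{-q}^{f})\Rightarrow K_{-p-q}(f)$ and $E_{2}^{p,q}=H^{p}(S,N^{i}\mathcal{K}_{-q}^{f})\Rightarrow N^{i}K_{-p-q}(f)$. In total degree $-n$ only the terms with $p+q=n$ and $0\leq p\leq d$ contribute. When $n>d+1$, each such term has $q=n-p\geq 2$ and $p>d-q+1$ (this last inequality is literally $n>d+1$), so every contributing $E_{2}$-term vanishes and $K_{-n}(f)=0$; when $n>d$ and $i>0$, each such term has $q=n-p\geq 1$ and $p>d-q$ (literally $n>d$), so $N^{i}K_{-n}(f)=0$. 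The step I expect to be the main obstacle is exactly this codimension bookkeeping --- one has to arrange that the dimension bounds on the supports of the sheaves $\mathcal{K}_{-q}^{f}$ match the cohomological degrees appearing in the spectral sequence on the nose, which forces the use of the sharp support-version of Grothendieck vanishing rather than the crude bound $H^{p}(S,-)=0$ for $p>d$. A secondary point that needs care, when $f$ is not affine, is the continuity statement in the stalk computation: the transition maps $X\times_{S}U'\to X\times_{S}U$ are open immersions rather than affine morphisms, so one appeals to the corresponding continuity result in \cite{TT} directly (when $f$ is affine the stalk identification is an elementary colimit of rings).
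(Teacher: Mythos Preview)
Your argument is correct and, at the level of ideas, matches the paper's: both use that $\mathcal{K}(f)$ and $N^{i}\mathcal{K}(f)$ satisfy Zariski descent (Lemma~\ref{zariskidescent}) together with a local-to-global principle that converts the pointwise vanishing hypotheses into the desired global vanishing. The difference is one of packaging. The paper's proof is a single citation: it applies Proposition~6.1 of \cite{KST} to the presheaves of spectra $\mathcal{K}(f)[-1]$ and $N^{i}\mathcal{K}(f)$, which is precisely the statement that a Zariski-descent presheaf of spectra whose stalks vanish below $-\dim\mathcal{O}_{S,s}$ has global sections vanishing below $-d$. You instead unpack that proposition by running the descent spectral sequence and proving the support-refined Grothendieck vanishing (``if $\mathcal{G}_{s}=0$ whenever $\dim\overline{\{s\}}>e$ then $H^{p}(S,\mathcal{G})=0$ for $p>e$'') by hand. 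Your route is more self-contained; the paper's is shorter but leans on the external reference.

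One small remark on your secondary concern: the transition maps $f^{-1}(U')\to f^{-1}(U)$, for $U'\subset U$ affine opens of $S$, are not merely open immersions---they are affine, being base changes of morphisms between affine schemes. So the standard continuity theorem for $K$-theory under cofiltered limits with affine transition maps applies directly, and no separate ``continuity along open immersions'' is needed, even when $f$ itself is not affine.
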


\begin{proof}
 The result is clear by Lemma \ref{zariskidescent} and Proposition 6.1 of \cite{KST}. More precisely, apply Proposition 6.1 of \cite{KST} to the presheaves of spectra $\mathbb{K}^{f}[-1]$ and $N^{r}\mathbb{K}^{f}.$
\end{proof}

We are now ready to prove Theorem \ref{smooth affine}.\medskip

{\it Proof of Theorem \ref{smooth affine}:}
 By Lemma \ref{reduction to affine}, we can assume that $S$ is affine. We can also assume that $S$ is reduced by Lemma \ref{reduction to reduced base}. We prove by using induction on $\dim S$. If $\dim S=0$ then the assertion is true by Lemma \ref{0 dim case}. Suppose $d>0.$ Assume that for every smooth, affine map $X \to S$ with $\dim S<d,$ we have $K_{-n}(X)=0$ for $n>\dim S$ (see Lemma \ref{basic reduction}). Let $n< -d$ and consider an element  $\xi$ in $K_{n}(X).$ Here $f$ is smooth and quasi-projective. Apply Proposition 5 of \cite{KS} to the map $f: X \to S.$ Then there exist a projective birational map $p: S^{'} \to S$ such that $\tilde p^{*}\xi=0$ where $\tilde p: X^{'}= X\times_S S^{'} \to X.$ We can choose a nowhere dense closed subset $Y\hookrightarrow S$ such that $p$ is an isomorphism outside $Y$. Then we obtain the following
 abstract blow-up squares 
 
 \begin{equation*}
  \begin{CD}
     Y^{'} @>>> S^{'}\\
    @VVV  @V p VV \\                 
    Y @>>> S
  \end{CD}
  \qquad \rm{and}
  \begin{CD}
   X\times_S Y^{'} @>>> X^{'}\\
    @VVV  @V \tilde p VV \\                 
    X\times_S Y @>>> X
  \end{CD}
\end{equation*}

   By applying Theorem A of \cite{KST}, we get a long exact sequence 
   $$ \dots  \to   \prolim{l}  K_{n+1}(X\times_S Y_{l}^{'}) \to K_{n}(X) \to  \prolim{l} K_{n}(X\times_S Y_{l})\oplus K_{n}(X^{'}) \to \dots $$ of pro-groups. Here $Y_{l}$ (resp. $Y_{l}^{'}$) is the $l$-th infinitesimal thickening of $Y$ (resp. $Y^{'}$) in $S$ (resp. $S^{'}$).   Observe that $X\times_S Y \to Y$ and $X\times_S Y^{'} \to Y$ are smooth, affine with $\dim Y< d$ and $\dim Y^{'}<d.$ Then by the induction hypothesis on $d$, the pro-groups involving $K_{n+1}(X \times_S Y_{l}^{'})$ and $K_{n}(X \times_S Y_{l})$ vanish. Therefore, $\tilde p^{*}: K_{n}(X) \to K_{n}(X^{'})$ is injective and hence $\xi =0.$ This proves the first part.
   
   In the second part, we can assume that $S$ is affine by Lemma \ref{reduction to affine}. Then $X$ is affine.  Now by the proof of Lemma \ref{reduction to reduced base}, we have $K_{-n}(f)\simeq K_{-n}(f\times_S S_{red})$ and $K_{-n}(f\times \mathbb{A}^r)\simeq K_{-n}((f\times_S S_{red})\times \mathbb{A}^r)$ for $n>0$ because $X$ and $S$ are affine. Therefore, we can also assume that $S$ is reduced. Again, we use induction on the dimension of $S.$ If $\dim S=0$ then the assertion is true by Lemma \ref{0 dim case}. Suppose $d>0.$ Assume that for every smooth, affine map $X \to S$ with $\dim S<d$, we have $N^{r}K_{-n}(X)=0$ for  $n\geq \dim S$ and $r>0$ (see Lemma \ref{basic reduction}). Let $n \leq -d$ and $r>0$. For each $r,$ we can argue the inductive step separately.  Consider $\xi \in K_{n}(\mathbb{A}_{X}^{r}).$ Apply Proposition 5 of \cite{KS}, to the map $\mathbb{A}_{X}^{r} \to \mathbb{A}_{S}^{r} \to S$, which is smooth and quasi-projective. Then there exist a projective birational map $p: S^{'} \to S$ such that $\tilde p^{*}\xi=0$ where $\tilde p: \mathbb{A}_{X^{'}}^{r} \to \mathbb{A}_{X}^{r}$ and $X^{'}= X\times_S S^{'}.$ We can choose a nowhere dense closed subset $Y\hookrightarrow S$ 
such that $p$ is an isomorphism outside $Y$. Now we have the following commutative diagram

   \scriptsize
   $$\begin{CD}
     @>>> \prolim {l} K_{n+1}(X\times_S Y_{l}^{'}) @>>> K_{n}(X) @>>> \prolim{l} K_{n}(X\times_S Y_{l})\oplus K_{n}(X^{'}) @>>> \dots \\
    @.   @V \beta_{Y^{'}}^{*} VV   @V \beta^{*} VV     @V\beta_{Y}^{*} \oplus \beta_{X^{'}}^{*} VV  \\
     @>>> \prolim{l} K_{n+1}(\mathbb{A}_{X\times_S Y_{l}^{'}}^{r}) @>>> K_{n}(\mathbb{A}_{X}^{r}) @>>> \prolim{l} K_{n}(\mathbb{A}_{X\times_S Y_{l}}^{r})\oplus K_{n}(\mathbb{A}_{X^{'}}^{r}) @>>> \dots,
   \end{CD}$$ \normalsize
   
   where the horizontal sequences are exact by Theorem A of \cite{KST}. Here $\beta$ is the projection map $ \mathbb{A}_{X}^{r} \to X$ and $\beta^{*}$ is the induced morphism. Since $\dim Y < d$ and $\dim Y^{'}< d$, the left and right column maps $\beta_{Y^{'}}^{*},$ $\beta_{Y}^{*}$ are isomorphism by the induction hypothesis. By the first part the pro-group in the upper horizontal sequence involving $K_{n}(X \times_S Y_{l})$ vanishes. Now a simple diagram chase gives that $\beta^{*}$ is surjective. Note that $\beta^{*}$ is always injective because $\beta$ always has a section. Hence we get the result. \qed

 \begin{remark}\label{reduced base ok}\rm{
    In the proof of the Theorem \ref{smooth affine}, the affineness of $f$ is only used to make the reduction that the base scheme $S$ is reduced. Therefore, if we assume that $S$ is a $d$-dimensional reduced scheme then the statement of Theorem \ref{smooth affine} is also true for any smooth, quasi-projective map $f: X\to S.$ }
   \end{remark}
   
   The above remark allows us to write the following
   
   \begin{theorem}\label{smooth quasiprojective}
    Let $f: X \to S$ be a smooth, quasi-projective map of noetherian schemes with $S$ reduced. Assume that $\dim S= d.$ Then $K_{-n}(f)=0$ for $n>d+1$ and $f$ is $K_{-n}$-regular for $n>d.$
   \end{theorem}

   \begin{remark}\label{regularity fails in general}\rm{
    The regularity result may fail if $f$ is smooth, quasi-projective with non-reduced base. Consider $X= C \times_{k} S,$ where $C$ is a smooth, projective curve of genus $g>0$ over the field $\mathbb{Q}$ and $S= \rm{Spec} (\mathbb{Q}[s]/s^2).$ Since $\dim(X)=1$, $NK_{-q}(X)=0$ for $q\geq 1.$ We have a Zariski descent spectral sequence for $X$
     $$H^{p}(X, \mathcal{NK}_{-q}) \Rightarrow NK_{-p-q}(X).$$ For an open affine subset $U$ of $C,$ $NK_{-q}(U \times_{\mathbb{Q}} S)= NK_{-q}(U)=0$ for $q\geq 0.$ For a regular $\mathbb{Q}$-algebra $A$, $K_{1}(A[t]/t^2, (t))= \ker [K_{1}(A[t]/t^2) \to K_{1}(A)]$ is isomorphic to $A$ as an abelian group and $NK_{1}(A[t]/t^2)$ is just an infinite copies of $A.$ So, we get \[
\mathcal{NK}_{-q}= \begin{cases} 0 & {\rm if}~ q\geq0 \\
                    \oplus_{i} \mathcal{O}_{C} & {\rm if}~ q=-1.
  \end{cases}
\] Now, by the spectral sequence $NK_{0}(X)= H^{1}(X, \mathcal{NK}_{1})\simeq \oplus_{i} H^{1}(C, \mathcal{O}_{C})\neq 0.$ }
   \end{remark}

   However, we have the following observation in the vanishing part for a smooth, quasi-projective map with non-reduced base.
   
   \begin{proposition}\label{nonreduced base}
 Let $f: X \to S$ be a smooth, quasi-projective map of schemes with $\dim S=d.$ Assume that $\dim X= d+1.$ Then $K_{-n}(f)=0$ for $n> d+1.$ 
\end{proposition}

\begin{proof}
 By Lemma \ref{basic reduction}, it is enough to show that $K_{-n}(X)=0$ for $n> d.$ By applying the Theorem \ref{smooth quasiprojective} for the map $f \times_{S} S_{red}: X\times_{S} S_{red} \to S_{red},$ we get $K_{-n}(X\times_{S} S_{red})=0$ for $n>d.$ Note that $(X\times_{S} S_{red})_{red}= X_{red}.$ Then $$K_{-n}(X\times_{S} S_{red})\cong K_{-n}((X\times_{S} S_{red})_{red})= K_{-n}(X_{red})\cong K_{-n}(X)$$ for $n>d,$ where the first and last isomorphisms by Lemma 4.6 of \cite{Morrow}. Hence the result. \end{proof}
 
The next remark was pointed out to me by C. Weibel.
\begin{remark}\label{false in surface case}\rm{
 The above proposition may fail if $\dim(X)= d+2.$ Consider $X= D\times_{\mathbb{C}} S$, where $D$ is a smooth, projective surface over $\mathbb{C}$ and $S= \rm{Spec} (\mathbb{C}[s]/s^2).$ Assume that the geometric genus of $D,$ $p_g(D)= \dim_{\mathbb{C}} H^2(D, \mathcal{O}_{D})>0.$ In particular, we can take $D$ to be $K3$ surface because the geometric genus is $1$ in this case. Note that $\dim(X)=2$ and $K_{-n}(X)\cong K_{-n}(X_{red})= K_{-n}(D)=0$ for $n>1.$ Now, we have a Zariski descent spectral sequence for $X$
 $$H^{p}(X, \mathcal{K}_{-q}) \Rightarrow K_{-p-q}(X).$$ For an open affine subset $U$ of $D,$ $K_{-q}(U \times_{\mathbb{C}} S)= K_{-q}(U)=0$ for $q> 0.$ So, we get
 
 \[
\mathcal{K}_{-q}= \begin{cases} 0 & {\rm if}~ q>0 \\
                      \mathbb{Z}& {\rm if}~ q=0\\
                      \mathcal{O}_{D}^{\times} \times \mathcal{O}_{D} & {\rm if}~ q=-1.
  \end{cases}
\]
 Moreover, the Zariski cohomological dimension of $X$ is $2$ and $H^{p}(X, \mathbb{Z})=0$ for $p>0.$  Therefore by the spectral sequence we get $K_{-1}(X)= H^{2}(X, \mathcal{K}_{1})= H^2(D, \mathcal{O}_{D})\neq 0.$}
\end{remark}

\section{Relative negative homotopy K-theory of smooth maps}\label{KH vanish}

In this section, all the schemes are defined over a field $k.$ 
The main goal is to prove Theorem \ref{Smooth,surjective}, which is a vanishing result for relative negative homotopy K-groups of a smooth map.

Let $X$ be a scheme over a field $k$ with the singular locus $Z= {\rm Sing} (X_{red}).$ A resolution of singularities (ROS) of $X$ is a proper map $p: \tilde {X} \to X,$ where $\tilde {X}$ is smooth over $k,$ and $p$ restricts to an isomorphism $ p^{-1} ((X - Z)_{red})\stackrel{\sim}\to (X - Z)_{red}.$  We say that the resolution of singularities holds over $k$ if ROS exists for every scheme $X$ over $k$.

If the resolution of singularities holds over $k$ then we can always produce a smooth cdh-cover of $X$ in the following way: First start with a resolution $\tilde {X} \to X.$ Then $Z\sqcup \tilde{X} \to X$ is a cdh-cover of $X.$ In the next step, we can resolve the singularities of $Z$ if it is singular. By iterating this process we get a smooth cdh-cover of $X$ and this process must terminate after finite steps because the dimension of the singular set decreases each step.  


Recall that $\mathcal{K}_{n, cdh}^{f}$ is the cdh-sheafification of the relative K-theory presheaf (see the introduction). By replacing $K$ by $KH,$ we get $\mathcal{KH}_{n, cdh}^{f}.$ 

\begin{lemma}\label{K=KH as a cdh}
 Let $f: X \to S$ be a smooth map of schemes over a field $k$. Assume that the resolution of singularities holds over $k$. Then $\mathcal{KH}_{n, cdh}^{f}\cong\mathcal{K}_{n, cdh}^{f}$ for all $n$ as a sheaf on the cdh-site over $S.$ Moreover, $\mathcal{K}_{n, cdh}^{f}$ is zero for $n<-1.$ 
\end{lemma}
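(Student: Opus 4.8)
The plan is to reduce to smooth schemes over $k$, where $K$-theory and homotopy $K$-theory agree, and then sheafify; resolution of singularities enters only through the existence of cdh-covers by smooth $k$-schemes. First I would record the local input: let $U$ be a smooth $k$-scheme equipped with a morphism $U\to S$. Since $f$ is smooth, the base change $f^{-1}U=U\times_S X\to U$ is smooth, hence $U\times_S X$ is smooth over $k$; in particular $U$ and $U\times_S X$ are both regular. By Remark \ref{K=KH}(1) applied to $U\times_S X\to U$, the canonical map $K_n(U,f^{-1}U)\to KH_n(U,f^{-1}U)$ is an isomorphism for every $n$; and since the negative $K$-theory of a regular scheme vanishes, the exact sequence (\ref{kgroup}) for $U\times_S X\to U$ gives $K_{-m}(U,f^{-1}U)=0$ for $m>1$, because $K_{1-m}(U\times_S X)$ and $K_{-m}(U)$ both vanish once $m\ge 2$.

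To globalize, write $a_{cdh}$ for cdh-sheafification, let $P_n$ and $Q_n$ be the presheaves $U\mapsto K_n(U,f^{-1}U)$ and $U\mapsto KH_n(U,f^{-1}U)$ on the category of schemes over $S$ (so $\mathcal{K}_{n,cdh}^{f}=a_{cdh}P_n$ and $\mathcal{KH}_{n,cdh}^{f}=a_{cdh}Q_n$ by definition), and let $\phi_n\colon P_n\to Q_n$ be the natural transformation induced by $K(f)\to KH(f)$ (Remark \ref{K=KH}). Using resolution of singularities over $k$ together with noetherian induction on dimension, every scheme $V$ over $S$ admits a cdh-cover by smooth $k$-schemes: resolve $V$, cover it by that resolution together with the reduced closed subscheme of smaller dimension over which the resolution fails to be an isomorphism, and iterate on the latter (cf.\ \cite{Has}, \cite{CHSW}). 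For such a cover $\{V_i\to V\}$ the first paragraph shows that $\phi_n(V_i)$ is an isomorphism and that $P_n(V_i)=0$ for $n<-1$. Hence every section of $\ker(\phi_n)$ and of $\coker(\phi_n)$ is cdh-locally zero, and so is every section of $P_n$ for $n<-1$; therefore these presheaves have trivial cdh-sheafification. It follows that $a_{cdh}\phi_n$ is an isomorphism, so $\mathcal{K}_{n,cdh}^{f}\cong\mathcal{KH}_{n,cdh}^{f}$ for all $n$, and that $\mathcal{K}_{n,cdh}^{f}=0$ for $n<-1$.

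The only non-formal ingredient is the existence of cdh-covers by smooth $k$-schemes, equivalently that cdh-sheafification is detected on $k$-smooth schemes; this is precisely where resolution of singularities is used, and it is the standard input underlying the cdh-descent theorems of \cite{Has} and \cite{CHSW}. Everything else is a routine sheafification argument, and one may note in passing that surjectivity of $f$ is not actually used in this lemma — it enters only in Theorem \ref{Smooth,surjective} and its consequences.
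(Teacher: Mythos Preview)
Your argument is correct, and it is cleaner than the paper's. Both proofs rest on the same idea---under resolution of singularities the cdh topology has a basis of smooth $k$-schemes, and on such a smooth $U$ both $U$ and $U\times_S X$ are regular, so $K$ and $KH$ agree and the relative groups vanish below degree $-1$---but the implementations differ. The paper proceeds point-by-point: given $s\in S$ it uses surjectivity to pick $x\in X$ over $s$, then invokes the local factorisation of smooth morphisms (Lemma~\ref{stack project}) to write $f$ Zariski-locally near $x$ as \'etale over $\mathbb{A}^d$, and finally appeals to Milne's result that \'etale over regular is regular. You bypass all of this by observing directly that $U\times_S X\to U$ is smooth and $U\to\operatorname{Spec}k$ is smooth, so $U\times_S X$ is smooth over $k$; the factorisation lemma and the choice of a preimage are unnecessary.

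The payoff of your route is real: you never use surjectivity. The paper explicitly records in Remark~\ref{surj} that it does \emph{not} know whether Lemma~\ref{K=KH as a cdh} holds without the surjectivity hypothesis (except in the \'etale case), so your argument actually answers that question affirmatively and hence extends Theorem~\ref{Smooth,surjective} to arbitrary smooth maps. Your closing parenthetical that surjectivity ``enters only in Theorem~\ref{Smooth,surjective}'' therefore undersells what you have shown: once the lemma is freed from surjectivity, the proof of Theorem~\ref{Smooth,surjective} goes through verbatim without it as well.
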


\begin{proof}
Since the  resolution of singularities holds over $k,$ schemes are locally smooth in the cdh topology. So, we may assume that $S$ is smooth over $k.$ Moreover, it is well known that the negative absolute K-theory of regular schemes vanish. Hence the assertion follows by the exact sequence (\ref{kgroup}) and Remark \ref{K=KH}.\end{proof}


Let $\mathbb{KH}$ be the presheaf of spectra of homotopy $K$-theory on $Sch/S.$  For a morphism of schemes $f: X \to S$, let $\mathbb{KH}^{f}$ be the presheaf of spectra on the category $Sch/S,$ defined as $$\mathbb{KH}^{f}(U)= \rm{hofib}[ \mathbb{KH}(U) \to \mathbb{KH}(X \times_{S} U)].$$ 

{\it Proof of Theorem \ref{Smooth,surjective}:}
 In \cite[Theorem 3.9]{Cisinki},  Cisinski proved that $\mathbb{KH}$ satisfies cdh descent when $S$ is a finite dimensional noetherian scheme (it is proved in \cite{Has} when $S$ is a point). Then  $\mathbb{KH}^{f}$ satisfies cdh descent (the arguments are similar to Lemma \ref{zariskidescent}). Therefore, we have a descent spectral sequence (by Theorem 3.4 of \cite{CHSW} and Theorem 2.8 of \cite{Has})  $$
E_{2}^{p,q}= H_{cdh}^{p}(S, \mathcal{KH}_{-q, cdh}^{f})\Rightarrow KH_{-p-q}(f).
$$ Also by Lemma \ref{K=KH as a cdh}, $\mathcal{KH}_{q, cdh}^{f}\cong\mathcal{K}_{q, cdh}^{f}$ as a sheaf on the cdh-site over $S$ and for $q<-1,$ $\mathcal{K}_{q, cdh}^{f}$ is zero. Moreover, the cdh cohomological dimension is at most $d$ (see Theorem 5.13 of \cite{SV}). Hence, we get $KH_{-n}(f)=0$ for $n>d+1$ and $H_{cdh}^{d}(S, \mathcal{K}_{-1, cdh}^{f})= KH_{-d-1}(f).$ \qed

\section{Relative negative K-theory of subintegral maps}\label{subint case}
In this section, we study the relative negative $K$-groups of a subintegral map of schemes. In particular, we prove Theorem \ref{K-dim}. We begin with the following definition.

\begin{definition}
  Let $f: X\to S$ be a faithful affine morphism, i.e, $f$ is affine and the structure map $\mathcal{O}_{S}\to f_{*}\mathcal{O}_{X}$ is injective. We say that $f$ is subintegral if $\mathcal{O}_{S}(U)\to f_{*}\mathcal{O}_{X}(U)$ is subintegral for all affine open subsets $U$ of $S$. 
\end{definition}

Next, we recall a notion of relative Picard group $\Pic (f)$ for a map $f: X\to S$ of schemes. The relative $\Pic (f)$ is the abelian group  generated by $[L_1,\alpha,L_2]$, where the $L_i$ are 
line bundles on $S$ and $\alpha:f^*L_1\to f^*L_2$ is an isomorphism.
The relations are:
\begin{enumerate}
\item $[L_1,\alpha,L_2] + [L'_1,\alpha',L'_2] = 
[L_1\otimes L'_1,\alpha\otimes\alpha',L_2\otimes L'_2]$;
\item $[L_1,\alpha,L_2] + [L_2,\beta,L_3] = [L_1,\beta\alpha,L_3]$;
\item $[L_1,\alpha,L_2]=0$ if $\alpha=f^*(\alpha_0)$ 
for some $\alpha_0:L_1\cong L_2$.
\end{enumerate}

This relative Picard group $\Pic (f)$ fits into the following exact sequence 

\begin{align}\label{U-Pic}
 \mathcal{O}^{\times}(S)\to \mathcal{O}^{\times}(X) \to \Pic(f) \to \Pic(S) \to \Pic(X).
\end{align} Some relevant details and basic properties can be found in \cite{Bass-Tata}, \cite{SW}, \cite{SW1}.

 Given a ring extension $A\hookrightarrow B,$ let $\mathcal{I}(A, B)$( or $\mathcal{I}(f)$) denote the multiplicative group of invertible $A$-submodules of $B.$  We refer to section 2 of \cite{rs} for details. There is a natural group homomorphism $\psi: \mathcal{I}(f) \to \Pic(f), I\mapsto [I, \alpha, A]$, where $\alpha: I\otimes_{A} B\to B$ is the canonical isomorphism sending $i\otimes b$ to $i\cdot  b$ for $i\in I, b\in B.$
 
 If $\mathbb{Q}\subset A$ then a natural group homomorphism $\xi_{B/A}: B/A \to \mathcal{I}(f)$ is constructed by Roberts and Singh in \cite{rs}. 

\begin{lemma}\label{ring}
 Let  $f:A\hookrightarrow B$ be a subintegral extension of $\mathbb{Q}$-algebras. Then the natural map $\psi \circ\xi_{B/A}: B/A \to \Pic(f)$ is an isomorphism.
\end{lemma}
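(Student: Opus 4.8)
\textbf{Proof plan for Lemma \ref{ring}.}
The plan is to identify $\Pic(f)$ with the quotient $\mathcal{O}^{\times}(X)/\mathcal{O}^{\times}(S)$ in this subintegral situation and then to recognize the composite $\psi\circ\xi_{B/A}$ as the inverse of a known Roberts--Singh isomorphism. First I would exploit the exact sequence (\ref{U-Pic}). Since $f:A\hookrightarrow B$ is subintegral, $\mathrm{Spec}(B)\to\mathrm{Spec}(A)$ is a bijection inducing isomorphisms on residue fields; a finite subintegral (hence integral, universally homeomorphic) extension does not change the \'etale, hence the Zariski, topology on the small site, so $\Pic(\mathrm{Spec}(A))\to\Pic(\mathrm{Spec}(B))$ is injective (indeed an isomorphism, by \cite{rs} or \cite{swan}). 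Feeding this into (\ref{U-Pic}) collapses it to a short exact sequence
\begin{equation*}
0\to \mathcal{O}^{\times}(S)\to\mathcal{O}^{\times}(X)\to\Pic(f)\to 0,
\end{equation*}
i.e. $\Pic(f)\cong B^{\times}/A^{\times}$.

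Next I would bring in the Roberts--Singh machinery. For a subintegral extension of $\mathbb{Q}$-algebras, the standard results of \cite{rs} (the exponential construction $\xi_{B/A}$, Theorem~4.17 and Corollary~4.3 quoted in the text) give that $\xi_{B/A}:B/A\to\mathcal{I}(f)$ is an isomorphism onto the subgroup of $\mathcal{I}(f)$ lying over the identity of $\Pic(A)$; combined with the identification $\mathcal{I}(f)/(\text{principal})\cong\Pic(f)$ coming from $\psi$, one gets that $\psi$ restricted to the image of $\xi_{B/A}$ is well-defined. So the content to verify is that $\psi\circ\xi_{B/A}$ is bijective. I would do this by a direct diagram comparison: the composite $B^{\times}/A^{\times}\to\Pic(f)$ (the connecting-type map from the collapsed sequence above) and $\psi\circ\xi_{B/A}:B/A\to\Pic(f)$ should be identified through the classical group isomorphism $B/A\xrightarrow{\ \exp\ }B^{\times}/A^{\times}$ valid over $\mathbb{Q}$ when $B$ is integral (indeed subintegral) over $A$, since then $B/A$ consists of nilpotent-like elements in the relevant sense: more precisely $1+\mathfrak{n}_{B/A}$ is a group and $\log,\exp$ are mutually inverse bijections between $B/A$ and $B^{\times}/A^{\times}$ (this uses that $B$ is a finite $A$-module with $B/A$ killed by the conductor, so one works in finitely many artinian-type quotients where the series terminate). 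Chasing an element $b\in B$ through $I_{B/A}(b)=B[t]\cap A[[t]]e^{bt}$, setting $t=1$, and applying $\psi$ should reproduce exactly the class of $e^{b}\in B^{\times}$ in $B^{\times}/A^{\times}$, giving a commuting triangle with two known isomorphisms and hence the third.

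The main obstacle, I expect, will be the last comparison: showing that $\psi(\tau(I_{B/A}(b)))$ equals the class of $\exp(b)$ in $B^{\times}/A^{\times}\cong\Pic(f)$, i.e. matching the invertible-module description of $\mathcal{I}(f)$ with the explicit cocycle description of $\Pic(f)$. This is essentially bookkeeping with the relations (1)--(3) defining $\Pic(f)$ and the definition of $\psi$, but it is where all the constructions must be pinned down compatibly; the surjectivity and injectivity then come for free once the triangle commutes, because two of its three legs are isomorphisms. A secondary point to be careful about is confirming that $b/1\in B/A$ forces $e^{bt}$ to be a polynomial times an element of $A[[t]]$ so that $I_{B/A}(b)$ really is an invertible $A$-submodule of $B$ — but this is exactly the cited Theorem~4.17 and Corollary~4.3 of \cite{rs}, so I would simply invoke it rather than reprove it.
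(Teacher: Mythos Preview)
Your collapse of the sequence (\ref{U-Pic}) rests on the claim that $\Pic(A)\to\Pic(B)$ is injective for a subintegral extension, and this is false. The paper itself supplies the counterexample in the remark following Proposition~\ref{quasi-coherent}: for $A=\mathbb{Q}[t^2,t^3]\hookrightarrow B=\mathbb{Q}[t]$ one has $\Pic(A)\cong\mathbb{Q}$ and $\Pic(B)=0$. Your topological justification goes wrong because, although a subintegral map is a universal homeomorphism and hence induces an equivalence of small \'etale sites, $\Pic$ is $H^1$ of the sheaf $\mathbb{G}_m$, which depends on the structure sheaf and is not preserved under that equivalence. Consequently the identification $\Pic(f)\cong B^{\times}/A^{\times}$ fails in general, and the whole triangle you set out to build collapses with it. The related attempt to define $\exp:B/A\to B^{\times}/A^{\times}$ directly also fails: in the same example $x\in B$ is not nilpotent and $e^{x}$ is not an element of $B$, while $B^\times/A^\times=0$ but $B/A\cong\mathbb{Q}$. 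Your parenthetical appeal to finiteness of $B$ over $A$ does not rescue this, both because subintegral does not imply finite and because even in the finite case the conductor annihilating $B/A$ does not make representatives nilpotent.

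The paper's argument avoids all of this by observing that each factor is already known to be an isomorphism: $\psi:\mathcal{I}(f)\to\Pic(f)$ is an isomorphism by Lemma~1.2 of \cite{SW1}, and $\xi_{B/A}:B/A\to\mathcal{I}(f)$ is an isomorphism by Theorem~5.6 of \cite{rs} together with Theorem~2.3 of \cite{rrs}. You partially gesture at the second of these but understate it (you say $\xi_{B/A}$ lands only in a subgroup of $\mathcal{I}(f)$; in the subintegral case it is onto), and you do not invoke the first at all. Once both citations are in hand the lemma is a one-line composition; no element chase or exponential comparison is needed.
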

\begin{proof}
 By Lemma 1.2 of \cite{SW1}, $\psi$ is an isomorphism. Moreover, $\xi_{B/A}$ is an isomorphism follows from Theorem 5.6 of \cite{rs} and Theorem 2.3 of \cite{rrs}. Hence the lemma.
\end{proof}

The following Proposition generalizes the above result for schemes.

\begin{proposition}\label{scheme}
 Let $f: X\to S$ be a subintegral morphism of $\mathbb{Q}$-schemes. Then 
 $\Pic(f)\cong H_{zar}^{0}(S, \it{f_{*}\mathcal{O}_{X}/\mathcal{O}_{S}})$.
\end{proposition}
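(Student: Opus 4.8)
The plan is to globalise Lemma \ref{ring} by a sheafification argument. For an open $U\subseteq S$ the restricted morphism $f^{-1}U\to U$ is again subintegral, so we may form the presheaf $\mathcal{P}$ on $S$ with $\mathcal{P}(U)=\Pic(f^{-1}U\to U)$, and the main step is to prove that $\mathcal{P}$ is a Zariski sheaf. I would deduce this from the presentation of $\Pic$ by generators $[L_1,\alpha,L_2]$ and relations (1)--(3), together with the faithfulness of $f$ (which makes $\mathcal{O}_S^{\times}\to f_*\mathcal{O}_X^{\times}$ injective on sections over every open). Two elementary consequences of (1)--(3) and the exactness of (\ref{U-Pic}) at $\mathcal{O}^{\times}(X)$ are needed: after twisting, every class in $\Pic(f^{-1}U\to U)$ has the form $[M,\alpha,\mathcal{O}_U]$ with $M$ a line bundle on $U$ and $\alpha\colon f^{*}M\xrightarrow{\ \sim\ }\mathcal{O}_{f^{-1}U}$, and for such classes (a) $[M,\alpha,\mathcal{O}_U]=0$ iff $\alpha=f^{*}\beta$ for an isomorphism $\beta\colon M\xrightarrow{\ \sim\ }\mathcal{O}_U$, and (b) $[M_1,\alpha_1,\mathcal{O}_U]=[M_2,\alpha_2,\mathcal{O}_U]$ iff there is an isomorphism $\gamma\colon M_1\xrightarrow{\ \sim\ }M_2$ with $\alpha_1=\alpha_2\circ f^{*}\gamma$. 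Now, for a cover $\{U_i\}$ of $U$: if a class restricts to $0$ on each $U_i$, (a) gives trivialisations $\beta_i$ of $M|_{U_i}$ with $f^{*}\beta_i=\alpha|_{U_i}$; these agree after applying $f^{*}$, so by faithfulness the $\beta_i$ agree on overlaps and glue to a trivialisation making the class $0$ (separatedness). If classes $\theta_i\in\mathcal{P}(U_i)$ agree on overlaps, (b) gives isomorphisms $\gamma_{ij}$ with $f^{*}\gamma_{ij}=\alpha_j^{-1}\alpha_i$; faithfulness forces the cocycle identity $\gamma_{jk}\gamma_{ij}=\gamma_{ik}$, so the $M_i$ glue to a line bundle $M$ on $U$ and the $\alpha_i$ to $\alpha\colon f^{*}M\xrightarrow{\ \sim\ }\mathcal{O}_{f^{-1}U}$, yielding $[M,\alpha,\mathcal{O}_U]\in\mathcal{P}(U)$ restricting to $\theta_i$ (gluing).

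Once $\mathcal{P}$ is known to be a sheaf, the rest is formal. Since $f$ is affine, $f_*\mathcal{O}_X/\mathcal{O}_S$ is quasi-coherent and $(f_*\mathcal{O}_X/\mathcal{O}_S)(U)=B/A$ for every affine open $U=\operatorname{Spec}A$ with $f^{-1}U=\operatorname{Spec}B$. By Lemma \ref{ring} the maps $\psi\circ\xi_{B/A}\colon B/A\xrightarrow{\ \sim\ }\Pic(f^{-1}U\to U)$ are isomorphisms, and they commute with restriction to smaller affine opens by the functoriality of the Roberts--Singh construction $b\mapsto I_{B/A}(b)$ and of $\psi$ recorded in \cite{rs}. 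As the affine opens form a basis of $S$, these isomorphisms assemble into a morphism of Zariski sheaves $f_*\mathcal{O}_X/\mathcal{O}_S\to\mathcal{P}$ that is an isomorphism on every affine open, hence on stalks, hence an isomorphism. Taking global sections gives
\[
H^0_{zar}(S,\,f_*\mathcal{O}_X/\mathcal{O}_S)\ \cong\ \mathcal{P}(S)\ =\ \Pic(f).
\]

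The only genuine obstacle is the sheaf property of $\mathcal{P}$; everything after it is bookkeeping. (If \cite{Bass-Tata}, \cite{SW}, \cite{SW1} already contain the statement that $U\mapsto\Pic(f^{-1}U\to U)$ is a Zariski sheaf for a faithful affine $f$, the first paragraph can be replaced by a reference.) The one minor point to check along the way is the compatibility of $\xi_{B/A}$ with localisation; since $I_{B/A}(b)=B[t]\cap A[[t]]e^{bt}$ is a polynomial ideal cut out by $A$-linear conditions on coefficients, it localises correctly, and this is part of the functoriality in \cite{rs}. This is also the only place the hypothesis that $X$ and $S$ are $\mathbb{Q}$-schemes is used.
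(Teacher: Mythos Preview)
Your argument is correct, but it follows a different route from the paper's. The paper does not verify the sheaf property of $U\mapsto\Pic(f^{-1}U\to U)$ directly. Instead it passes through the multiplicative quotient sheaf: at each $s\in S$ one has
\[
(f_*\mathcal{O}_X/\mathcal{O}_S)_s \;\cong\; B_s/A_s \;\cong\; \mathcal{I}(A_s,B_s) \;\cong\; (f_*\mathcal{O}_X^\times/\mathcal{O}_S^\times)_s,
\]
the middle isomorphism by Lemma~\ref{ring} and the last by the local exact sequence~(\ref{U-Pic}); hence $f_*\mathcal{O}_X/\mathcal{O}_S \cong f_*\mathcal{O}_X^\times/\mathcal{O}_S^\times$ as sheaves on $S$. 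Then the identification $\Pic(f)\cong H^0_{zar}(S,\,f_*\mathcal{O}_X^\times/\mathcal{O}_S^\times)$ is simply quoted from Lemma~5.4 of \cite{SW}. So the sheaf-theoretic work you carry out by hand in your first paragraph is, in the paper, absorbed into that reference (your own parenthetical remark anticipates exactly this). What your approach buys is self-containment: you never leave the additive sheaf $f_*\mathcal{O}_X/\mathcal{O}_S$ and you make explicit why faithfulness of $f$ forces the separatedness and cocycle conditions. What the paper's approach buys is brevity and the additional byproduct $f_*\mathcal{O}_X/\mathcal{O}_S \cong f_*\mathcal{O}_X^\times/\mathcal{O}_S^\times$, which it reuses later in Proposition~\ref{quasi-coherent}. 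Either way, Lemma~\ref{ring} is the only place the $\mathbb{Q}$-scheme hypothesis enters, as you note.
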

\begin{proof}
 Let $s\in S$. Then $(f_{*}\mathcal{O}_{X}/\mathcal{O}_{S})_{s}\cong B_s/A_s\cong\Pic (f_s)\cong (f_{*}\mathcal{O}^{\times}_X/\mathcal{O}^{\times}_{S})_s$, where the second isomorphism by Lemma \ref{ring} and the third isomorphism by the exact sequence (\ref{U-Pic}). This implies that $f_{*}\mathcal{O}_{X}/\mathcal{O}_{S}\cong f_{*}\mathcal{O}^{\times}_X/\mathcal{O}^{\times}_{S}$ as a sheaves on $S$. Now the result follows from Lemma 5.4 of \cite{SW}.\end{proof}

\begin{proposition}\label{quasi-coherent}
 Let $f: X\to S$ be a subintegral morphism of noetherian $\mathbb{Q}$-schemes. Then the following are true:
 \begin{enumerate}
  \item $f_{*}\mathcal{O}^{\times}_X/\mathcal{O}^{\times}_{S}$ is the sheaf of abelian groups underlying a quasi-coherent sheaf on $S.$
  \item $H_{zar}^{i}(S, f_{*}\mathcal{O}^{\times}_X/\mathcal{O}^{\times}_{S}) =H_{et}^{i}(S, f_{*}\mathcal{O}^{\times}_X/\mathcal{O}^{\times}_{S})$.
  \item If $S$ is affine and $f$ is finite then for $i>1$, $H_{\tau}^{i}(S, \mathcal{O}^{\times}_{S})\cong H_{\tau}^{i}(X, \mathcal{O}^{\times}_{X})$, where $\tau \in \{ zar, et\}.$
 \end{enumerate}
\end{proposition}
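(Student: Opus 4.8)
The plan is to establish the three parts in sequence, with (1) feeding (2) and both feeding (3).

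For (1), I would use that a subintegral morphism is affine by definition, so that $f_{*}\mathcal{O}_X$ is quasi-coherent and the unit map $\mathcal{O}_S\to f_{*}\mathcal{O}_X$ is a morphism of quasi-coherent $\mathcal{O}_S$-modules; hence its cokernel $f_{*}\mathcal{O}_X/\mathcal{O}_S$ is quasi-coherent. Now I would invoke the identification already made in the proof of Proposition \ref{scheme}: the stalkwise isomorphisms $B_s/A_s\cong\mathcal{I}(A_s,B_s)\cong B_s^{\times}/A_s^{\times}$ coming from Lemma \ref{ring} and the exact sequence (\ref{U-Pic}) are natural in $s$ and glue to an isomorphism of sheaves $f_{*}\mathcal{O}_X/\mathcal{O}_S\cong f_{*}\mathcal{O}^{\times}_X/\mathcal{O}^{\times}_S$ on $S$. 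Therefore $f_{*}\mathcal{O}^{\times}_X/\mathcal{O}^{\times}_S$ is quasi-coherent. Part (2) is then immediate from (1) together with the classical fact that the Zariski and \'etale cohomology of a quasi-coherent sheaf agree, equivalently that $R^{i}\varepsilon_{*}\mathcal{F}=0$ for $i>0$, where $\varepsilon\colon S_{et}\to S_{zar}$ is the change-of-topology morphism and $\mathcal{F}$ is quasi-coherent (see \cite{sp}); one applies this to $\mathcal{F}=f_{*}\mathcal{O}^{\times}_X/\mathcal{O}^{\times}_S$.

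For (3), write $\mathcal{Q}=f_{*}\mathcal{O}^{\times}_X/\mathcal{O}^{\times}_S$ and consider, on the site $S_{\tau}$, the short exact sequence of abelian sheaves $1\to\mathcal{O}^{\times}_S\to f_{*}\mathcal{O}^{\times}_X\to\mathcal{Q}\to 1$. When $\tau=et$ one should first check that the \'etale cokernel of $\mathcal{O}^{\times}_S\to f_{*}\mathcal{O}^{\times}_X$ is again the \'etale sheaf attached to the quasi-coherent $\mathcal{Q}$: since a subintegral morphism is radicial, each geometric point $\bar{s}$ of $S$ has a unique preimage $\bar{x}$ in $X$, so that cokernel has stalk $\mathcal{O}^{\times}_{X,\bar{x}}/\mathcal{O}^{\times}_{S,\bar{s}}$, and one applies Lemma \ref{ring} to the subintegral extension $\mathcal{O}_{S,\bar{s}}\hookrightarrow\mathcal{O}_{X,\bar{x}}$ of strictly henselian $\mathbb{Q}$-algebras. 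Since $S$ is affine and $\mathcal{Q}$ is quasi-coherent, $H_{zar}^{i}(S,\mathcal{Q})=0$ for $i>0$, hence $H_{et}^{i}(S,\mathcal{Q})=0$ for $i>0$ by part (2). The long exact cohomology sequence then yields $H_{\tau}^{i}(S,\mathcal{O}^{\times}_S)\cong H_{\tau}^{i}(S,f_{*}\mathcal{O}^{\times}_X)$ for all $i>1$. It remains to identify the right-hand side with $H_{\tau}^{i}(X,\mathcal{O}^{\times}_X)$ via the Leray spectral sequence for $f$, for which I would show $R^{j}f_{*}\mathcal{O}^{\times}_X=0$ for $j>0$: in the Zariski topology $f$ is a homeomorphism, being integral (hence a closed map) and bijective on points (being subintegral), so $f_{*}$ is exact; in the \'etale topology $f_{*}$ is exact because $f$ is finite.

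The step I expect to cause the most trouble is the passage from stalks to sheaves in (1) together with the \'etale-cokernel check at the start of (3): one must be careful that $f_{*}\mathcal{O}^{\times}_X/\mathcal{O}^{\times}_S$ --- which is a priori only a sheaf of abelian groups, formed as a quotient of two sheaves that are not themselves quasi-coherent, and which is computed differently in the Zariski and \'etale topologies --- really does come from a single quasi-coherent $\mathcal{O}_S$-module, so that the comparison theorem used in (2) is legitimately available for it. Granting that, the remainder is a formal diagram chase with the long exact sequence and the Leray spectral sequence.
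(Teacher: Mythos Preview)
Your proposal is correct and follows essentially the same route as the paper: for (1) you use that $f$ is affine so $f_*\mathcal{O}_X/\mathcal{O}_S$ is quasi-coherent and then invoke the sheaf isomorphism $f_*\mathcal{O}_X/\mathcal{O}_S\cong f_*\mathcal{O}_X^\times/\mathcal{O}_S^\times$ established in the proof of Proposition~\ref{scheme}; for (2) you use the Zariski--\'etale comparison for quasi-coherent sheaves; and for (3) you take the long exact sequence of $1\to\mathcal{O}_S^\times\to f_*\mathcal{O}_X^\times\to\mathcal{Q}\to 1$, kill $H^i_\tau(S,\mathcal{Q})$ for $i>0$ via (1)--(2) and affineness of $S$, and identify $H^i_\tau(S,f_*\mathcal{O}_X^\times)$ with $H^i_\tau(X,\mathcal{O}_X^\times)$ via vanishing of higher direct images for a finite map. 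The paper does exactly this, only more tersely: it simply asserts the last identification ``since $f$ is finite'' and does not pause over the \'etale cokernel, so your extra checks (radicial stalks, Leray) are additional care rather than a different argument.
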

\begin{proof}
 (1) Since $f$ is affine, $f_{*}\mathcal{O}_X$ is quasi-coherent. Then the quotient $f_{*}\mathcal{O}_X/\mathcal{O}_S$ is also  quasi-coherent. Therefore, we get the result by using the fact that $f_{*}\mathcal{O}_{X}/\mathcal{O}_{S}\cong f_{*}\mathcal{O}^{\times}_X/\mathcal{O}^{\times}_{S}$ (see the proof of Proposition \ref{scheme}). 
 
 (2) This follows from the fact that Zariski and \'etale cohomology coincides for a quasi-coherent sheaf (see Remark 3.8 of \cite{Milne}).
 
 (3) Consider the long exact cohomology sequence associated to the following exact sequence of sheaves on $S,$
 $$ 1 \to \mathcal{O}_{S}^{\times} \to f_{*}\mathcal{O}_{X}^{\times} \to f_{*}\mathcal{O}^{\times}_X/\mathcal{O}^{\times}_{S} \to 1 .$$ Since $f$ is finite, $H_{\tau}^{i}(S, f_{*}\mathcal{O}^{\times}_{X})\cong H_{\tau}^{i}(X, \mathcal{O}^{\times}_{X}).$ By (1),  $H_{zar}^{i}(S, f_{*}\mathcal{O}^{\times}_X/\mathcal{O}^{\times}_{S})=0$ for $i>0$, because $S$ is affine. Hence the assertion. \end{proof}

\begin{remark}\rm{
 The statement (3) of the above Proposition may fail for $i=1$. For example, consider $A=\mathbb{Q}[t^2, t^3]$ and $B= \mathbb{Q}[t].$ In this case, $\Pic(A)\cong \mathbb{Q}$ and $\Pic(B)=0$.}
\end{remark}

Recall that $\mathcal{K}_{n,zar}^{f}$ is the Zariski sheafification of the presheaf $U\mapsto K_{n}(f|_{U})$. Here $f|_{U}$ is the map $f^{-1}(U) \to U.$ 

\begin{lemma}\label{K=0}
 If $f$ is subintegral then $\mathcal{K}_{-q, zar}^{f}=0$ for $q>0$.
\end{lemma}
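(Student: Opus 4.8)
The plan is to verify the vanishing at the level of stalks, since $\mathcal{K}_{-q}^{f}$ is the Zariski sheafification of $U \mapsto K_{-q}(U, f^{-1}U)$ and a sheaf is zero iff all its stalks are. Fix a point $s \in S$; the stalk of $\mathcal{K}_{-q}^{f}$ at $s$ is $\operatorname{colim}_{s \in U} K_{-q}(U, f^{-1}U)$, which by the usual continuity of $K$-theory and of the fiber product along a filtered limit of affine opens equals $K_{-q}(f_s)$, where $f_s$ is the map $X \times_S \operatorname{Spec}(\mathcal{O}_{S,s}) \to \operatorname{Spec}(\mathcal{O}_{S,s})$. So it suffices to show $K_{-q}(f_s) = 0$ for $q > 0$ whenever $f_s$ is a subintegral morphism of affine schemes.

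Now $f_s$ corresponds to a subintegral ring extension $A_s \hookrightarrow B_s$ (localization preserves subintegrality: being integral, a bijection on spectra, and inducing isomorphisms on residue fields are all stable under localizing the base, with $B_s = B \otimes_A A_s$). At this point I would simply invoke Proposition 2.5 of \cite{SW1}, which states precisely that for a subintegral map $\operatorname{Spec}(B) \to \operatorname{Spec}(A)$ one has $K_{-n}(f) = 0$ for $n > 0$. Applying this to $A_s \hookrightarrow B_s$ gives $K_{-q}(f_s) = 0$ for $q > 0$, hence the stalk vanishes, hence $\mathcal{K}_{-q}^{f} = 0$ for $q > 0$.

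The only point requiring a little care — and the one I would state explicitly — is the identification of the stalk with $K_{-q}(f_s)$: one writes $\mathcal{O}_{S,s}$ as a filtered colimit of the coordinate rings of the affine opens containing $s$, uses that $K$-theory of affine schemes commutes with such filtered colimits (both for the source, the target, and the relative term, since the homotopy fiber is compatible with filtered colimits of spectra), and notes that $X \times_S \operatorname{Spec}(\mathcal{O}_{S,s})$ is correspondingly the limit of the $f^{-1}U$. This is the main (and essentially only) obstacle; everything else is a direct citation. One subtlety worth flagging is that the affineness hypothesis in Proposition 2.5 of \cite{SW1} is genuinely used — indeed Example 6.6 of \cite{SW1} shows the non-affine statement fails — but here it is automatically satisfied because $f$ is affine by definition of a subintegral morphism and $\operatorname{Spec}(\mathcal{O}_{S,s})$ is affine.
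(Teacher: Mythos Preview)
Your proposal is correct and follows essentially the same approach as the paper: both compute the stalk of $\mathcal{K}_{-q}^{f}$ at a point $s\in S$ as the relative $K$-group of the subintegral extension $\mathcal{O}_{S,s}\hookrightarrow (f_{*}\mathcal{O}_{X})_{s}$ and then invoke Proposition~2.5 of \cite{SW1}. Your write-up is simply more explicit about the stalk identification and the preservation of subintegrality under localization, points the paper leaves implicit.
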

\begin{proof}
 Each stalk of  $\mathcal{K}_{-q, zar}^{f}$ is $K_{-q}(A, B)$, where $A\subset B$ is a subintegral extension of local rings. By Proposition 2.5 of \cite{SW1}, $K_{-q}(A, B)=0$ for $q>0$. Hence the result. \end{proof}

{\it Proof of Theorem \ref{K-dim}:}
We have a descent spectral sequence 
 $$
E_{2}^{p,q}= H_{zar}^{p}(S, \mathcal{K}_{-q, zar}^{f})\Rightarrow K_{-p-q}(f).
$$ By Lemma \ref{K=0}, $\mathcal{K}_{-q, zar}^{f}=0$ for $q>0$. Moreover, $\mathcal{K}_{0, zar}^{f}\cong f_{*}\mathcal{O}^{\times}_X/\mathcal{O}^{\times}_{S}$ by the exact sequence (2.1) of \cite{SW1}. Since the Zariski cohomological dimension of $S$ is at most d, we get the first two assertions. The last assertion follows from Proposition \ref{quasi-coherent}(2). \qed

Let $X$ be a scheme. The seminormalization of $X$ can be obtained by mimicking the normalization process. For each affine open subset $U= \rm{Spec}(A)$ of $X$, let $^{^+}\!\!\!A $ be the subintegral closure (or seminormalization) of $A$ in its total quotient ring. Let $^{^+}\!\!\! ~U= \rm{Spec}(^{^+}\!\!\!A ).$ Now by gluing together such schemes $ ^{^+}\!\!\!~ U$ we get $ X_{\rm{sn}}$, which we call the seminormalization of $X$. Clearly, then $ X_{\rm{sn}} \to X$ is a subintegral morphism.

\begin{corollary}\label{seminormalization}
 Let $X$ be a $d$-dimensional noetherian scheme. Let $X_{\rm{sn}}$ be the seminormalization of $X$. Then $K_{-n}(X)\cong K_{-n}(X_{\rm{sn}})$ for $n> d$ and $K_{-d}(X) \to K_{-d}(X_{\rm{sn}})$ is surjective.
\end{corollary}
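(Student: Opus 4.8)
The plan is to apply Theorem \ref{K-dim} to the subintegral morphism $f\colon X_{\rm{sn}}\to X$ and then read off the two assertions from the long exact sequence $\rm{Seq}(K_{n},f)$ of (\ref{kgroup}).

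First I would note that, as observed just above the statement, $f\colon X_{\rm{sn}}\to X$ is a subintegral morphism of noetherian schemes whose base $X$ has dimension $d$. Hence Theorem \ref{K-dim}(1) applies and yields $K_{-n}(f)=0$ for all $n>d$.

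Next I would substitute this vanishing into the exact sequence
$$\cdots \to K_{-n}(f)\to K_{-n}(X)\to K_{-n}(X_{\rm{sn}})\to K_{-n-1}(f)\to K_{-n-1}(X)\to \cdots .$$
For $n>d$ both $K_{-n}(f)$ and $K_{-n-1}(f)$ vanish (the latter because $n+1>d$ as well), so exactness forces the middle map $K_{-n}(X)\to K_{-n}(X_{\rm{sn}})$ to be an isomorphism. For $n=d$ only $K_{-d-1}(f)$ is known to vanish, so exactness at $K_{-d}(X_{\rm{sn}})$ gives that $K_{-d}(X)\to K_{-d}(X_{\rm{sn}})$ is surjective; it need not be injective, as $K_{-d}(f)$ may be nonzero.

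There is no real obstacle beyond bookkeeping: the only points deserving attention are to ensure that $X$, and not $X_{\rm{sn}}$, plays the role of the base $S$ in Theorem \ref{K-dim}, and to track the degree shift in (\ref{kgroup}) so that the single vanishing range $n>d$ for $K_{-n}(f)$ produces an isomorphism for $n>d$ but only a surjection for $n=d$.
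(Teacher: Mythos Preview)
Your proposal is correct and follows exactly the paper's approach: the paper's proof reads ``Clear from Theorem \ref{K-dim}(1) and the long exact sequence (\ref{kgroup}),'' which is precisely what you have spelled out in detail.
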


\begin{proof}
 Clear from Theorem \ref{K-dim}(1) and the long exact sequence (\ref{kgroup}).
\end{proof}

\section{On Regularity}\label{relative regularity}

A theorem of Vorst says that if a ring $A$ is $K_{n}$-regular then it is $K_{n-1}$-regular (see V. 8.6 of \cite{wei 1}). Now we prove Theorem \ref{Analouge of Vorst}, which is a relative version of Vorst's result. \medskip

For a ring $A$ and an element $a\in A,$ consider the ring homomorphism $[a]: A[X] \to A[X],$ $X\mapsto aX.$ In \cite{Vorst}, Vorst defined $NK_{n}(A)_{[a]}$ as the direct limit of the directed system
$$ NK_{n}(A)\stackrel{[a]}\to NK_{n}(A) \stackrel{[a]}\to NK_{n}(A) \stackrel{[a]}\to \dots    .$$ For details, we refer to section 1 of \cite{Vorst}.  By applying $N$ to the exact sequence (\ref{kgroup}), we get the following exact sequence 
 \begin{equation}\label{nkgroup}
  \dots \rightarrow NK_{n+1}(B)  \rightarrow NK_{n}(f)\rightarrow NK_{n}(A)\rightarrow NK_{n}(B)\rightarrow NK_{n-1}(f)\rightarrow \dots ~~.
\end{equation} 
 {\it Proof of Theorem \ref{Analouge of Vorst}:}
First we suppose that $K_{n}(f)\cong K_{n}(f[s,t, u]).$ Then $NK_{n}(f[s, t])=0.$ The goal is to show that $NK_{n-1}(f)=0.$ By (\ref{nkgroup}), it is enough to show that  $NK_{n}(A)\rightarrow NK_{n}(B)\to 0$ and $0\to NK_{n-1}(A) \to NK_{n-1}(B).$ 
    Since $NK_{n}(f[s])=NK_{n}(f[s, t])=0,$ we get 
    \begin{equation}\label{nkgroupf[s]}
     NK_{n+1}(A[s])\rightarrow NK_{n+1}(B[s]) \to 0,~~  0\to NK_{n}(A[s])\rightarrow NK_{n}(B[s])
    \end{equation} and 
    
\begin{equation}\label{nkgroupf[s,t]}
     NK_{n+1}(A[s,t])\rightarrow NK_{n+1}(B[s,t]) \to 0,~~   0\to NK_{n}(A[s,t])\rightarrow NK_{n}(B[s,t]).
    \end{equation}
    By Lemma V.8.5 of \cite{wei 1}, $NK_{n}(A[s])_{[s]}\cong NK_{n}(A[s, 1/s]).$ Since the direct limit is an exact functor, from (\ref{nkgroupf[s]})  we obtain $$NK_{n+1}(A[s, 1/s])\rightarrow NK_{n+1}(B[s, 1/s]) \to 0 $$ and $$ 0\to NK_{n}(A[s, 1/s])\rightarrow NK_{n}(B[s, 1/s]).$$ Now applying the Bass fundamental theorem (see Theorem V.8.2 of \cite{wei 1}), we get $$NK_{n}(A)\rightarrow NK_{n}(B)\to 0.$$ In fact, $NK_{n}(A)\cong NK_{n}(B)$ because $NK_{n}(f)=0.$ From (\ref{nkgroupf[s,t]}), we can show that $N^{2}K_{n}(A)\cong N^{2}K_{n}(B)$ by a similar argument. Thus the injection $  0\to NK_{n}(A[s, 1/s])\rightarrow NK_{n}(B[s, 1/s])$  implies that $0\to NK_{n-1}(A) \to NK_{n-1}(B).$  Hence $NK_{n-1}(f)=0.$

   Similarly, by assuming $NK_{n}(f[s, t, u])=0$ and using the Bass fundamental theorem,  we can show that  $N^{3}K_{n}(A)\cong N^{3}K_{n}(B)$ and $N^{2}K_{n-1}(f)=0.$
   
   Therefore, by repeating the same arguments we get $N^{i}K_{n-1}(f)=0$ for all $i>0$. \qed

\begin{proposition}\label{reg for subintegral}
 If $f$ is a non-trivial subintegral map of affine schemes then $f$ cannot be $K_{n}$-regular for $n\geq 0.$ 
\end{proposition}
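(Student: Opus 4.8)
The plan is to derive both assertions from two facts about the proper subintegral extension $A\hookrightarrow B$ underlying $f$ (the statement being vacuous for $A=B$): that $NK_{0}(f)\neq 0$, and that $K_{0}(f)\neq 0$. Granting $NK_{0}(f)\neq 0$, the regularity claim follows at once from Theorem \ref{Analouge of Vorst}: if $f$ were $K_{n}$-regular for some $n\geq 0$, then applying that theorem $n$ times would make $f$ $K_{0}$-regular, forcing $NK_{0}(f)=0$ -- a contradiction; so $f$ is $K_{n}$-regular for no $n\geq 0$. The nonvanishing of the relative groups themselves I would obtain by starting from $K_{0}(f)\neq 0$ and moving upward with the relative Bass Fundamental Theorem (Example 5.1 of \cite{SW1}); this last propagation is the step I expect to be the main obstacle, and I discuss it at the end.

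To see $NK_{0}(f)\neq 0$: for an arbitrary morphism $g$ there is a natural injection $\operatorname{Pic}(g)\hookrightarrow K_{0}(g)$, obtained by comparing the exact sequences (\ref{U-Pic}) and (\ref{kgroup}) -- a triple $[L_{1},\alpha,L_{2}]$ gives the class $[L_{1}]-[L_{2}]$ of $K_{0}(S)$ together with the nullhomotopy of its pullback furnished by $\alpha$ -- and invoking that $\mathcal{O}^{\times}$ and $\operatorname{Pic}$ are natural direct summands of $K_{1}$ and $K_{0}$ (determinant), after which injectivity is a short diagram chase. Applied to $f$ and to $f[t]$ compatibly with $t\mapsto 0$, this yields a monomorphism $N\operatorname{Pic}(f)\hookrightarrow NK_{0}(f)$, so it suffices to show that $\operatorname{Pic}(f)\to\operatorname{Pic}(f[t])$ fails to be an isomorphism. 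When $\mathbb{Q}\subseteq A$ this is clear from Lemma \ref{ring}: naturality of $\psi\circ\xi_{B/A}$ gives $\operatorname{Pic}(f)\cong B/A$ and $\operatorname{Pic}(f[t])\cong(B/A)[t]$, with the specialisation the inclusion of constant polynomials, so $N\operatorname{Pic}(f)\cong t\,(B/A)[t]\neq 0$ because $A\subsetneq B$. In general I would reduce to this by a relative form of Traverso's theorem: $A$ is not seminormal in $B$ (a subintegral extension with $A$ seminormal in $B$ is trivial), which should force $\operatorname{Pic}(f[t])\neq\operatorname{Pic}(f)$. Either way $NK_{0}(f)\neq 0$.

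For $K_{0}(f)\neq 0$: since $A\subsetneq B$ there is a prime $\mathfrak{p}$ of $A$ with $A_{\mathfrak{p}}\subsetneq B_{\mathfrak{p}}$ and an element $b\in B_{\mathfrak{p}}\setminus A_{\mathfrak{p}}$; because $f$ induces an isomorphism on the residue field at $\mathfrak{p}$, there is $a\in A_{\mathfrak{p}}$ with $b-a$ in the maximal ideal of the local ring $B_{\mathfrak{p}}$, whence $1+(b-a)$ is a unit of $B_{\mathfrak{p}}$ not lying in $A_{\mathfrak{p}}$. So the stalk at $\mathfrak{p}$ of $\mathcal{K}_{0}^{f}\cong f_{*}\mathcal{O}_{X}^{\times}/\mathcal{O}_{S}^{\times}$ is nonzero, hence $\mathcal{K}_{0}^{f}\neq 0$; since $S$ is affine, this nonvanishing passes to $K_{0}(f)$ via the injection $\operatorname{Pic}(f)\hookrightarrow K_{0}(f)$ together with the descent spectral sequence of Theorem \ref{K-dim} (relating $\operatorname{Pic}(f)$, and thereby $K_{0}(f)$, to $H^{0}(S,\mathcal{K}_{0}^{f})$ in the manner of Proposition \ref{scheme}). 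To obtain $K_{n}(f)\neq 0$ for $n\geq 1$ I would induct on $n$, using that $f[s]$ and $f[s,1/s]$ are again subintegral maps of affine schemes and that the relative Fundamental Theorem provides decompositions $K_{n}(f[s])\cong K_{n}(f)\oplus NK_{n}(f)$ and $K_{n}(f[s,1/s])\cong K_{n}(f)\oplus K_{n-1}(f)\oplus NK_{n}(f)^{\oplus 2}$, which let one transport nonvanishing from degree $n-1$ into degree $n$. The real difficulty lies exactly here: arranging this bookkeeping so that it concludes the nonvanishing of $K_{n}(f)$ itself, rather than of $K_{n}$ of some polynomial or Laurent extension of $f$ -- and, as a secondary matter, pinning down the mixed-characteristic instance of the relative Traverso statement invoked above.
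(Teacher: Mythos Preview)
For the first assertion your plan is essentially the paper's, though you are working harder than necessary. By Proposition~2.5 of \cite{SW1} one has an \emph{isomorphism} $K_{0}(f)\cong\Pic(f)$ for a subintegral map of affine schemes, hence $NK_{0}(f)\cong N\Pic(f)$ outright, and your determinant diagram chase is superfluous. For $N\Pic(f)\neq 0$ the paper simply cites Theorem~1.5 of \cite{ss}: $N\Pic(f)=0$ if and only if the extension is seminormal, which a proper subintegral extension never is. That is exactly the ``relative Traverso'' statement you were reaching for, and it holds in every characteristic, so your secondary worry disappears once you cite it. Theorem~\ref{Analouge of Vorst} then finishes part one just as you describe.

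For the second assertion the paper does not attempt your upward induction via the fundamental theorem at all; it argues by contradiction against the first part. Assuming $K_{n}(f)=0$ for some $n\ge 0$, the paper notes that each $f[t_{1},\dots,t_{l}]$ is again subintegral, asserts that therefore $K_{n}(f[t_{1},\dots,t_{l}])=0$ as well, and concludes that $f$ would be $K_{n}$-regular, contradicting what was just established. This sidesteps entirely the bookkeeping obstacle you identified. You should, however, examine that middle step carefully: since $K_{n}(f[t_{1},\dots,t_{l}])\cong K_{n}(f)\oplus(\text{sums of }N^{i}K_{n}(f))$, the implication from $K_{n}(f)=0$ to $K_{n}(f[t_{1},\dots,t_{l}])=0$ is equivalent to $N^{i}K_{n}(f)=0$ for all $i$, i.e.\ to $K_{n}$-regularity itself, so as written the paper's sentence appears to presuppose the very thing it then contradicts. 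Your instinct that the genuine difficulty lives in the second assertion is therefore correct. Note also that your argument for $K_{0}(f)\neq 0$ only produces a nonzero \emph{stalk} of $f_{*}\mathcal{O}_{X}^{\times}/\mathcal{O}_{S}^{\times}$, which does not by itself force the global group $\Pic(f)\cong K_{0}(f)$ to be nonzero; in the $\mathbb{Q}$-algebra case Lemma~\ref{ring} already gives $\Pic(f)\cong B/A\neq 0$ directly, and in general one should again lean on \cite{ss} or the Roberts--Singh theory rather than the local construction.
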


\begin{proof}
 Since $f$ is subintegral, $K_{0}(f)\cong \Pic(f)$ by Proposition 2.5 of \cite{SW1} and hence $NK_{0}(f)\cong N\Pic(f).$ By Theorem 1.5 of \cite{ss}, $N\Pic(f)=0$ if and only if $f$ is seminormal. Therefore, $NK_{0}(f)\neq 0$ and hence $f$ is not $K_{0}$-regular. Now Theorem \ref{Analouge of Vorst} implies that $f$ cannot be $K_{n}$-regular for $n\geq 0.$\end{proof}
 

\begin{remark}\label{converse of vorst}\rm{
 The converse of  Theorem \ref{Analouge of Vorst} does not hold. Consider $f$ to be a subintegral map of affine schemes. Then $f[t_{1}, t_{2}, \dots, t_{l}]$ also subintegral. Now by Proposition 2.5 of \cite{SW1}, we have  $K_{n}(f)= K_{n}(f[t_{1}, t_{2}, \dots, t_{l}])=0$ for $n<0.$ Hence $f$ is $K_{n}$-regular for $n<0.$ But $f$ is not $K_{0}$-regular by Proposition \ref{reg for subintegral}. In particular, consider $f: \rm{Spec} (\mathbb{Q}[x]/(x^2)) \to \rm{Spec} (\mathbb{Q}).$ Here $f$ is subintegral, $K_{-1}$-regular but not $K_{0}$-regular.} 
\end{remark}

\begin{thebibliography}{AAA}

\bibitem{Bass-Tata}
H. Bass and A. Roy,
Lectures on topics in algebraic K-theory,
Tata Institute of Fundamental Research Lectures on Mathematics, No. 41, 
Tata Institute of Fundamental Research, Bombay 1967. 

\bibitem {b}    H. Bass, {\it Algebraic K-theory}. Benjamin, Newyork, 1968.

\bibitem{Cisinki} Denis-Charles Cisinski, {\it Descente par eclatements en K-theorie invariante par homotopie}, Annals of Mathematics, {\bf 177(2)} (2013), 425-448.
 \bibitem{CHSW} G. Cortinas, C. Haesemeyer, M. Schlichting and C, Weibel, {\it Cyclic homology, cdh-cohomology and negative K-theory}, Annals of Mathematics, {\bf 167} (2008), 549-573.
 
 \bibitem{GH} T. Geisser and L. Hesselholt, {\it On the vanishing of negative $K$-groups}, Math. Ann. {\bf348} (2010), 707-736.

 
\bibitem{Has} C. Haesemeyer, {\it Descent properties of homotopy K-theory}, Duke Mathematical Journal, {\bf 125} (2004), 589-620.
 
 \bibitem{KS} M. Kerz and F. Strunk, {\it On the vanishing of negative homotopy $K$-theory}, Journal of Pure and Appl. algebra, {\bf 221} (2017), 1641-1644.
  
  
  
 \bibitem{KST} M. Kerz, F. Strunk and G. Tamme, {\it Algebraic K-theory and Descent for blowup}, Invent. math, {\bf 211} (2018), 523-577.
 
 \bibitem{SK} S. Kelly, {\it Vanishing of negative K-theory in positive characteristic}, Compositio Math. {\bf 150} (2014), 1425-1434.
 
 \bibitem{Krishna} A. Krishna, {\it On the negative K-theory of schemes of finite characteristic}, Journal of Algebra, {\bf 322} (2009), 2118-2130.
 

  \bibitem{Milne} J. Milne, {\it \'Etale Cohomology}. Princeton: Princeton University Press 1980.
  
 \bibitem{Morrow} M. Morrow, {\it Pro cdh-descent for cyclic homology and K-theory}, J. Inst. Math. Jussieu, {\bf 15} (2016), 539-567.

\bibitem {rrs} L.  Reid, L. G. Roberts and B.  Singh, Finiteness of subintegrality, in: P.G Goerss and J.F Jardine (eds.) Algebraic K-Theory and Algebraic Topology NATO ASI, Series  C, Vol. 407 (Kluwer Academic Publishers, Dordrecht, 1993),   223-227. 

\bibitem  {rs}  L. G. Roberts and B.  Singh, {\it Subintegrality, invertible modules and the Picard group}, Compositio Math. {\bf 85} (1993), 249-279.
\bibitem{sp} Stack Project, Morphisms.


\bibitem{ss}    V. Sadhu and B. Singh, {\it Subintegrality, invertible modules and  Polynomial Extensions}, J. Algebra {\bf 393} (2013), 16-23.

\bibitem{SW} V. Sadhu and C. Weibel, {\it Relative Cartier divisors and Laurent polynomial Extensions}, Math. Z, {\bf 285}   (2017), 353-366.

\bibitem{SW1}V. Sadhu and C. Weibel, {\it Relative Cartier divisors and K-theory}, (arxiv.1604.05951v1), pp. 1-19 in the TIFR Proceedings of the International Colloquium on K-theory, 2016, Hindusan Book Agency, New Delhi, India.

\bibitem{SV} A. Suslin and V. Voevodsky, {\it Bloch-Kato conjecture and motivic cohomology with finite coefficients,} The Arithmetic and Geometry of Algebraic Cycles (B. Gordon, J. Lewis, S. Muller-Stach, S. Saito, N. Yui ed.), Nato ASI Series C, vol. 548, Kluwer, 2000, pp. 117-189.

 
\bibitem {swan} R. G Swan, {\it On Seminormality}, J. Algebra {\bf 67} (1980), 210-229.

\bibitem{Thomason} R.W. Thomason. {\it Algebraic K-theory and \'etale cohomology}, Ann.sci. Ecole Norm. Sup.(4), {\bf 18(3)} (1985), 437-552.

\bibitem{TT} R. W. Thomason and T. Trobaugh, Higher algebraic $K$-theory of schemes and of derived categories, pp. 247-435 in {\em The Grothendieck Festschrift III}, Progress in Math. 88, Birkh\"auser. 1990.

\bibitem{Vorst} T. Vorst, {\it Localization of the K-theory of polynomial extensions}, Math. Ann. {\bf 244}(1979), 33-53.
\bibitem{Wei80} C. Weibel, {\it K-theory and Analytic isomorphism}, Invent math, {\bf 61} (1980), 177-197.
 

\bibitem{weiKH} C. Weibel, {\it Homotopy Algebraic K theory}, Contemporary Mathematics, {\bf 83} (1989).

\bibitem{wei duke} C. Weibel, {\it The negative K-theory of normal surface}, Duke Mathematical Journal, {\bf 108} (2001), 1-35.

 \bibitem{wei 1} C. Weibel, The K -Book: An Introduction to Algebraic K-theory.  (Graduate Studies in Math. vol. 145, AMS, 2013)

\end{thebibliography}
\end{document}